\newcommand{\citenoadjust}[1]{{\let\cite@adjust\empty#1}}
\DeclareMathOperator{\Int}{int}
\newcommand{\Reals}{\mathbb{R}}
\newcommand{\ZZ}{\mathbb{Z}}
\newcommand{\Sphere}{\mathbb{S}}
\newtheorem{fact}{Fact}
\newtheorem{theorem}{Theorem}
\newtheorem{lemma}{Lemma}
\newtheorem{coro} {Corollary}
\title{Convex equipartitions of volume and surface area}
\author{Alfredo Hubard}
\author{Boris Aronov}
\address[A.~Hubard]{Courant Institute of Mathematical Sciences, NYU\\
         New York, NY 10012 USA}
\email[A.~Hubard]{hubard@cims.nyu.edu}
\address[B.~Aronov]{Department of Computer Science and Engineering\\
         Polytechnic Institute of NYU\\
         Brooklyn, NY 11201 USA}
\email[B.~Aronov]{aronov@poly.edu}
\begin{document}
\begin{abstract}
  We show that, for any prime $p$ and any convex body $K$ (i.e.,
  a compact convex set with interior) in $\Reals^d$, there exists a
  partition of $K$ into $p^k$ convex sets with equal volume and equal
  surface area. Similar results regarding equipartitions with respect to continuous functionals on convex bodies are also proved.
   \end{abstract}
\maketitle

\section{Introduction}
Imagine that you are cooking chicken at a party. You will cut the raw chicken fillet with a sharp knife, marinate each of the pieces in a spicy sauce and then fry the pieces. The surface of each piece will be crispy and spicy. Can you cut the chicken so that all your guests get the same amount of crispy crust and the same amount of chicken? Thinking of two-dimensional convex chickens, Nandakumar and Ramana Rao~\cite{nr} asked the ``interesting and annoyingly
resistant question''~\cite{bar} of whether \emph{a convex body in the
  plane can be partitioned into $n$ convex regions with equal areas
  and equal perimeters}.  This is easy for $n=2$ and known for $n=3$
\cite{bar}. We confirm this conjecture and its natural generalization to higher dimensions for $n$ a prime power.
 \begin{theorem} \label{main}
  Given a convex body $K$ in $\Reals^d$ and a prime $p$, it is
  possible to partition $K$ into $p^k$ convex bodies with equal
  $d$-dimensional volumes and equal $(d-1)$-dimensional surface area, for any integer $k\geq 1$.
\end{theorem}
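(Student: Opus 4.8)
The plan is to prove Theorem~\ref{main} via the configuration-space/test-map scheme. Write $n=p^k$ and let $F(\Reals^d,n)$ denote the configuration space of ordered $n$-tuples of pairwise distinct points of $\Reals^d$. For sites $\mathbf{x}=(x_1,\dots,x_n)\in F(\Reals^d,n)$ and weights $\mathbf{w}=(w_1,\dots,w_n)\in\Reals^n$, the associated power diagram cuts $K$ into the convex cells $K_i=\{y\in K:\|y-x_i\|^2-w_i\le\|y-x_j\|^2-w_j\text{ for all }j\}$. I will invoke the Minkowski-type theorem for power diagrams of Aurenhammer, Hoffmann and Aronov (equivalently, existence and uniqueness for quadratic-cost optimal transport): for every tuple of distinct sites there is a weight vector, unique up to a common additive constant, for which $\vol(K_i)=\vol(K)/n$ for all $i$, and this weight vector depends continuously on $\mathbf{x}$. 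Hence $\mathbf{x}\mapsto(K_1,\dots,K_n)$ is a continuous, $S_n$-equivariant map from $F(\Reals^d,n)$ to the space of labelled equal-volume convex partitions of $K$ into $n$ pieces, where $S_n$ acts by permuting sites and cells; and since each cell remains full-dimensional of volume $\vol(K)/n$, the cells vary continuously in the Hausdorff metric and so do their surface areas.

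Next I set up the test map. Let $W_n=\{t\in\Reals^n:\sum_i t_i=0\}$ be the standard representation of $S_n$, and let $\Phi\colon F(\Reals^d,n)\to W_n$ send $\mathbf{x}$ to the orthogonal projection onto $W_n$ of the vector of surface areas $(\operatorname{Area}(\partial K_1),\dots,\operatorname{Area}(\partial K_n))$; since permuting the sites permutes the cells, $\Phi$ is $S_n$-equivariant. A zero of $\Phi$ is precisely a tuple of sites whose equal-volume power diagram also has all surface areas equal, that is, the partition asserted by the theorem. (If one wishes to avoid the uniqueness-of-weights input, one may instead work with the space of all power diagrams --- homotopy equivalent to $F(\Reals^{d+1},n)$ via the standard paraboloid lift --- together with a test map to $W_n\oplus W_n$ recording the deviations from equal volume and from equal surface area; the topology in the next step is unchanged in spirit.) It therefore remains to show that $\Phi$ vanishes somewhere.

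This is the crux. If $\Phi$ never vanished, then $\mathbf{x}\mapsto\Phi(\mathbf{x})/\|\Phi(\mathbf{x})\|$ would be an $S_n$-equivariant map from $F(\Reals^d,n)$ to the unit sphere $S(W_n)$, so the task is to rule out such a map. Restrict the $S_n$-action to the elementary abelian subgroup $G=(\ZZ/p)^k$, embedded in $S_n$ by the free left-translation action of $G$ on itself --- this is the one point where the hypothesis that $n$ is a prime power enters, and it has no analogue for other $n$. For $k=1$ the nonexistence is classical: $\ZZ/p$ acts freely on both $F(\Reals^d,p)$ and $S(W_p)$, and a computation of the Fadell--Husseini $\ZZ/p$-index of $F(\Reals^d,p)$ --- from the known cohomology ring of $F(\Reals^d,p)$ and the Serre spectral sequence of the Borel construction --- shows that this index does not contain the Euler class of the bundle $W_p\times_G EG\to BG$, a class that does lie in the index of $S(W_p)$; hence no equivariant map can exist. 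For $k\ge 2$ the $G$-action on $S(W_n)$ is no longer free --- $W_n$ has no nonzero $G$-fixed vector, but proper subgroups of $G$ do fix vectors in $W_n$ --- so a single Euler class no longer forces nonexistence; one must instead compute the full Fadell--Husseini ideal-valued index of $F(\Reals^d,n)$, or run equivariant obstruction theory following Blagojevi\'c and Ziegler, to show that the primary obstruction to an equivariant map $F(\Reals^d,n)\to S(W_n)$ is nonzero. This index (equivalently, obstruction-theoretic) computation is the substantive content of the proof and the step I expect to be the main obstacle; for $n$ not a prime power the analogous equivariant map does exist, so no argument of this kind can reach such $n$.

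Finally, the argument uses surface area only through its being a continuous functional on the cells of a continuously varying, uniformly full-dimensional power diagram; replacing each $\operatorname{Area}(\partial K_i)$ by $\phi(K_i)$ for an arbitrary continuous functional $\phi$ on convex bodies yields, verbatim, the more general equipartition statements mentioned in the abstract.
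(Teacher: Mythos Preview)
Your geometric reduction---equal-volume power diagrams from quadratic optimal transport, continuity of the cells in the Hausdorff metric, and an $S_n$-equivariant test map recording surface areas---matches the paper's setup exactly (the paper actually phrases everything for $d-1$ functionals at once, landing in $W_n^{\,d-1}$, but for Theorem~\ref{main} a single functional suffices and your version follows a fortiori).  The divergence is in how the topological obstruction is established.  The paper does \emph{not} restrict to an elementary abelian subgroup; it keeps the full $\Sigma_n$-action, forms the associated vector bundle $\eta^{d-1}$ over $F_n(\Reals^d)/\Sigma_n$, and shows its Euler class (in suitably twisted coefficients) is nonzero by exhibiting an explicit transversal section whose zero locus is the unique bottom cell $e$ of the Fuks--Vassiliev cell decomposition, then checking via the boundary formula $\partial e_{n_1}=\pm\binom{n}{n_1}e$ that $[e]$ is a nontrivial homology class precisely when $n=p^k$.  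Your proposed route---restrict to $(\ZZ/p)^k$ and invoke a Fadell--Husseini index computation, or run equivariant obstruction theory in the style of Blagojevi\'c--Ziegler---is a legitimate alternative that also succeeds for prime powers and was later carried out in the literature; note however that Blagojevi\'c--Ziegler themselves work with the full $\Sigma_n$ rather than an abelian subgroup, so the two options you name are not the same calculation.  Either way you have correctly isolated the hard step and the exact point where the prime-power hypothesis enters; what the paper supplies that your sketch defers is a self-contained proof of that obstruction.
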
 
In fact, we derive this result from the following much more general one. Let $\mathcal{K}^d$ be the space of convex sets
in $\Reals^d$ with the Hausdorff metric. We say that a measure $\mu$ is \emph{nice} if, for every hyperplane $H$, we have $\mu(H)=0$ and $\mu$ is in the weak closure of the set of absolutely continuous measures with a second moment.
\begin{theorem}\label{extra}
Given a nice measure $\mu$ on $\Reals^d$, 
 a
convex body $K\in \mathcal{K}^d$, and a family of $d-1$ continuous
functionals $G_1,G_2, \ldots G_{d-1} \colon \mathcal{K}^d \to \Reals$,
a prime number $p$, and an integer $k \geq 1$,
there is a partition of $K$ into $p^k$ convex bodies $K_1, K_2 \ldots K_{p^k}$, so that  $$\mu(K_i)=\frac{\mu(K)}{p^k}$$ and $$G_j(K_1)=G_j(K_2)= \ldots =G_j(K_{p^k}),$$ for all $ 1\leq i\leq p^k$ and $1\leq j\leq d-1$.
\end{theorem}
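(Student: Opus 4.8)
The plan is to reduce Theorem~\ref{extra} to a purely topological non-existence statement by the configuration-space/test-map scheme. Write $n=p^k$ and fix a subgroup $G\leq S_n$ with $G\cong(\ZZ/p)^k$ acting on the index set $\{1,\dots,n\}$ by its regular representation, so that every non-identity element of $G$ permutes the indices without fixed points. A classical existence-and-uniqueness result for power diagrams (Minkowski's theorem, in the form of Aurenhammer--Hoffmann--Aronov, adapted to the measure $\mu$) assigns to each configuration $x=(x_1,\dots,x_n)$ of $n$ distinct points of $\Reals^d$ a canonical partition of $\Reals^d$ into convex power cells $C_1(x),\dots,C_n(x)$ with $\mu(C_i(x))=\mu(K)/n$, the generating weights being unique up to a common additive constant. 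This is exactly where the hypothesis that $\mu$ be \emph{nice} is used: $\mu(H)=0$ for hyperplanes $H$ makes the cell masses depend continuously and strictly monotonically on the weights (hence uniqueness), and approximability by absolutely continuous measures with finite second moment reduces this to the classical case. Setting $K_i(x):=C_i(x)\cap K$ and subtracting off the mean in each coordinate, one obtains a test map
$$\tau\colon F(\Reals^d,n)\to W_n^{\oplus(d-1)},\qquad \tau(x)=\Bigl(\,\bigl(\,G_j(K_i(x))-\tfrac1n\textstyle\sum_{\ell}G_j(K_\ell(x))\,\bigr)_{i=1}^{n}\,\Bigr)_{j=1}^{d-1},$$
where $W_n=\{a\in\Reals^n:\sum_i a_i=0\}$ is the standard $(n-1)$-dimensional real representation of $S_n$. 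By construction the $K_i(x)$ partition $K$ into convex pieces of equal $\mu$-mass, and a zero of $\tau$ is precisely an equipartition of the kind demanded by the theorem.

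Next I would check that $\tau$ is continuous and $G$-equivariant. Continuity of $x\mapsto(C_1(x),\dots,C_n(x))$ follows from uniqueness of the weights together with a compactness argument: along any convergent sequence of configurations the corresponding normalized weights remain bounded — otherwise some cell would lose all of its $\mu$-mass, contradicting $\mu(C_i)=\mu(K)/n>0$, and niceness is what rules out the mass escaping onto a limiting bisector — so the partitions converge, and by uniqueness to the correct limit. Hence $x\mapsto K_i(x)$ is continuous in the Hausdorff metric, and $\tau$ is continuous since each $G_j$ is continuous on $\mathcal{K}^d$. Equivariance is immediate, as relabelling the sites by $\sigma\in G$ relabels the cells by $\sigma$ and leaves the means invariant, so $\tau(\sigma x)=\sigma\tau(x)$. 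Consequently, if $\tau$ had no zero, then $x\mapsto\tau(x)/\|\tau(x)\|$ would be a continuous $G$-equivariant map $F(\Reals^d,n)\to S\bigl(W_n^{\oplus(d-1)}\bigr)$.

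It therefore suffices to establish the topological assertion that, for $n=p^k$, \emph{there is no $G$-equivariant map $F(\Reals^d,n)\to S\bigl(W_n^{\oplus(d-1)}\bigr)$}, and this is the step I expect to be the real obstacle. It cannot come from a bare connectivity/Borsuk--Ulam argument: $F(\Reals^d,n)$ is only $(d-2)$-connected, while the target sphere has dimension $(d-1)(n-1)-1$, which exceeds $d-2$ as soon as $n\geq3$. One must instead exploit the full equivariant structure of $F(\Reals^d,n)$ — via its Fox--Neuwirth--Fuks cell decomposition, say — and run $G$-equivariant obstruction theory with $\ZZ/p$ coefficients (equivalently, estimate the Fadell--Husseini ideal-valued index of $F(\Reals^d,n)$), showing that the total obstruction class is nonzero. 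This is where primality of $p$ (so that mod-$p$ methods and a free $G$-action on $\{1,\dots,n\}$ are available) and the prime-power form $n=p^k$ are indispensable; for $n$ not a prime power the analogous statement is not known. I would either carry out this index computation directly or invoke the equivariant non-embedding theorem already used in work on convex equipartitions.

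Finally, Theorem~\ref{main} is the special case of Theorem~\ref{extra} in which $\mu$ is Lebesgue measure restricted to $K$ — which is nice, being absolutely continuous with bounded support and giving hyperplanes measure zero — $G_1$ is the $(d-1)$-dimensional surface-area functional, which is continuous on $\mathcal{K}^d$ in the Hausdorff metric, and $G_2,\dots,G_{d-1}$ are arbitrary continuous functionals (for instance identically zero); the $p^k$ pieces then have equal volume $\vol(K)/p^k$ and equal surface area.
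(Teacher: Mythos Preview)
Your proposal is correct and follows essentially the same configuration-space/test-map scheme as the paper: power diagrams furnish equal-$\mu$-mass convex partitions indexed by $F_n(\Reals^d)$, continuity and equivariance of the resulting test map into $W_n^{\oplus(d-1)}$ reduce the problem to the Borsuk--Ulam-type statement that no equivariant map $F_n(\Reals^d)\to S(W_n^{\oplus(d-1)})$ exists when $n=p^k$, and that statement is proved via the Fuks--Vasil'ev cell decomposition together with an Euler-class computation (the paper's theorem~\ref{top}). The only cosmetic differences are that the paper works throughout with the full $\Sigma_n$-action rather than restricting to an elementary abelian $p$-subgroup, and establishes continuity of the radii (its lemma~\ref{lemma2}) via stability of the optimal-transport map under Wasserstein convergence rather than by your direct compactness argument on the weights.
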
 
We use tools from equivariant topology and from the theory of optimal transport.

We remark at this point that in an earlier draft of this paper we had
the result only for primes smaller than the dimension. In the closing
remarks of that version we pointed out that the full conjecture
followed from a Borusk--Ulam-type statement which is very similar to
theorem~\ref{top} below.  Roman Karasev contacted us immediately after
we uploaded our paper \cite{ah} to the \emph{arXiv}. He informed us
that he had come up with a very similar (but not identical)
geometrical solution to the Nandakumar-Ramana Rao conjecture
~\cite{nr}  for prime powers; his paper \cite{k10} is now
available. He also informed us that theorem~\ref{top} below had been
proven \cite[lemma 6]{k09}; however, the proof is sketched in that paper, where the reader is referred to Vasil'ev's paper \cite{v} for some details, Vasil'ev himself assumes certain algebraic topology expertise.  For completeness we include a detailed write up of the Karasev-Vasil'ev-Fuks proof of theorem~\ref{top} that avoids the language of index theory and equivariant cohomology and should be accessible to anyone with a basic understanding of (co)homology and vector bundles. We explain the relationship to the Gromov-Borsuk-Ulam theorem of \cite{gro} and \cite{mem} and include proofs of a closely related result independently found by Karasev and Pablo Soberon \cite{s} confirming a conjecture of Kaneko and Kano \cite{kk} that generalizes the Ham Sandwich theorem of Banach.

\section{Optimal Transport}

Given two probability measures $\mu_X$ and $\mu_Y$ on spaces $X$ and
$Y$, respectively, a \emph{coupling} or a \emph{transportation plan}
is a measure $\Pi$ on the product space $X \times Y$ such that, for
measurable sets $A \subset X$ and $B
\subset Y$, the identities $\Pi(A \times Y)=\mu_X(A)$ and $\Pi(X \times B)=\mu_Y(B)$ hold. The Monge-Kantorovich
\emph{optimal transport problem} imposes a cost function $c \colon X
\times Y \to \Reals$ and asks for the transportation plan which
minimizes $\int_{X \times Y} c(x,y) d\Pi$.  Of particular interest is
the case when $X=Y=\Reals^d$, the cost function is the square of the
Euclidean norm, both $\mu_X$ and $\mu_Y$ have a second moment, and
$\mu_X$ is absolutely continuous with respect to Lebesgue measure. In
this case the optimal transport plan can be shown to be essentially
unique~\cite{b} and can be described by a map $T$ that pushes $\mu_X$
to $\mu_Y$, i.e., the measure is defined by $d\Pi=d \mu_x
\delta_{y=T(x)}$.  We denote this map by $T_{\mu_X}^{\mu_Y}$. The
function $d_W(\mu,\nu)=\int |x-T_\mu^\nu(x)|_2^2d\mu(x)$ is a
pseudo-metric on Borel probability measures with a second moment,
known as \emph{Wasserstein distance}, see~\cite{v08}.

\subsection{From absolutely continuous to absolutely discrete}
We will be interested in the case in which $\mu_X$ is an absolutely
continuous measure and $\mu_Y$ is supported on a finite set.  For each
$n$-tuple of pairwise distinct points (\emph{sites}) $x_1,x_2, \ldots
x_n \in \Reals^d$ with corresponding \emph{radii} $r_1,r_2, \ldots r_n
\in \Reals$ (called \emph{weights}\footnote[1]{The term ``weight'' is confusing in our context since our main objects are sums of delta masses.}
 in the literature), the \emph{power diagram} is a tessellation of $\Reals^d$ which generalizes the Dirichlet-Voronoi diagram. A
point $x\in \Reals^d$ gets assigned to the cell $C_i$ corresponding to
the site $x_i$ if $f_i(x)=|x-x_i|^2-r_i$ is minimal among all
$i$'s.\footnote[2]{The term ``power'' comes from Euclidean
  geometry. Recall that the power of a point $p$ with respect to a
  circle of radius $r$ and center $y$, that does not contain the point
  $p$, is $|p-y|^2-r^2$.  Returning to the choice of the word
  ``radius'', we choose not to write the square since our $r_i$'s
  could be negative.  Aurenhammer~\cite{a} prefers to keep the square
  and allow $r$ to be an arbitrary complex number.} 
Expanding the equation $f_i(x)=f_j(x)$, it is easy to see that
the cells are convex. Following similar notation to \cite{v08} we
denote by $P_2^{ac}(\Reals^d)$ the set of probability measures on
$\Reals^d$ that are absolutely continuous with respect to Lebesgue
measure and have a finite second moment.  By the Radon-Nikodym
theorem, for any such measure $\mu$, there exists an absolutely
continuous function $f_\mu \in L_1(\Reals^d, \mathcal{L}^d)$ such that
for every measurable set $A$ we have 
\[
  \mu(A)=\int_A  f_\mu d\mathcal{L}^d,
\]
where $\mathcal{L}^d$ denotes the $d$-dimensional Lebesgue measure. The function $f_{\mu}$ is usually referred to as the \emph{Radon-Nikodym derivative} of $\mu$ with respect to $\mathcal{L}^d$. We will denote it by $[\frac{d\mu}{d\mathcal{L}^d}]$. We will be interested in a subset of $P_2^{ac}(\Reals^d)$, namely the subset for which the Radon-Nikodym derivative is strictly positive. Denote this set of measures by $P_2^{{ac}^+}(\Reals^d)$.
\begin{theorem}[\citenoadjust{\cite{AHA,b,mac}}]\label{AHA}
  Let $\mu\in P_2^{{ac}^+}(\Reals^d)$  and $\nu$ be a convex combination of delta masses at
  $n$~ distinct points $\langle x_1,x_2, \ldots,
  x_n\rangle$. There exists a set of radii $\langle r_1, r_2, \ldots,
  r_n \rangle$ such that the power diagram $\langle C_1, C_2, \ldots,
  C_n\rangle$ defined by $\langle x_1,x_2, \ldots, x_n\rangle$ and
  $\langle r_1, r_2, \ldots, r_n \rangle$ gives the unique solution to the
  optimal transport problem, i.e.,
  \[
     (T_\mu^\nu)^{-1} (x_i)=C_i ,
  \]
  up to a set of measure zero.
\end{theorem}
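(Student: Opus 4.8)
\emph{Proof strategy.} The plan is to derive the statement from the Brenier--McCann theorem, recalled above: since $\mu\in P_2^{{ac}^+}(\Reals^d)$, the optimal transport map $T_\mu^\nu$ exists, is unique up to a $\mu$-null set, and has the form $T_\mu^\nu=\nabla\varphi$ for a convex function $\varphi\colon\Reals^d\to\Reals$. Because $\mu$ is equivalent to Lebesgue measure, $\nabla\varphi$ is defined Lebesgue-almost everywhere, and because $\nabla\varphi$ pushes $\mu$ forward to $\nu=\sum_i\lambda_i\delta_{x_i}$, we get $\nabla\varphi(x)\in\{x_1,\dots,x_n\}$ for almost every $x$, with $\mu(\{x:\nabla\varphi(x)=x_i\})=\lambda_i$. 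Everything then reduces to understanding the geometry of a convex function whose gradient takes only these $n$ values.

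First I would prove the structural claim that such a $\varphi$ is the upper envelope of $n$ affine maps. Setting $S_i:=\{x:\varphi\text{ is differentiable at }x\text{ and }\nabla\varphi(x)=x_i\}$, Fenchel's identity $y\in\partial\varphi(x)\Leftrightarrow x\in\partial\varphi^*(y)$ gives $S_i\subseteq\partial\varphi^*(x_i)$, a closed convex set; the sets $\partial\varphi^*(x_i)$ cover $\Reals^d$ up to a Lebesgue-null set, namely the non-differentiability locus of $\varphi$ together with the pairwise intersections $\partial\varphi^*(x_i)\cap\partial\varphi^*(x_j)$, each of which, being a lower-dimensional convex set, lies in a hyperplane. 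On the relative interior of $\partial\varphi^*(x_i)$ the function $\varphi$ is affine with gradient $x_i$, say $\varphi(x)=\langle x,x_i\rangle+b_i$ with $b_i\in\Reals$ (finite because every site carries positive $\nu$-mass, so the cell is full-dimensional); by continuity and convexity this forces $\varphi=\max_{1\le i\le n}(\langle\,\cdot\,,x_i\rangle+b_i)$ on all of $\Reals^d$, whence $\partial\varphi^*(x_i)$ agrees, up to its boundary, with $\{x:\langle x,x_i\rangle+b_i\ge\langle x,x_j\rangle+b_j\text{ for all }j\}$.

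It then remains to recognize these as power cells and to read off the mass constraint. Completing the square, $\langle x,x_i\rangle+b_i\ge\langle x,x_j\rangle+b_j$ is equivalent to $|x-x_i|^2-(|x_i|^2+2b_i)\le|x-x_j|^2-(|x_j|^2+2b_j)$, so with radii $r_i:=|x_i|^2+2b_i$ the set above is exactly the power cell $C_i$ of $\langle x_1,\dots,x_n\rangle$ with weights $\langle r_1,\dots,r_n\rangle$. Since every cell boundary lies in a finite union of hyperplanes and $\mu$ is absolutely continuous, $(T_\mu^\nu)^{-1}(x_i)=C_i$ up to a set of measure zero, and then the pushforward property of $T_\mu^\nu$ gives $\mu(C_i)=\mu((T_\mu^\nu)^{-1}(x_i))=\nu(\{x_i\})=\lambda_i$ at no extra cost. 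Uniqueness of the diagram follows from the $\mu$-a.e.\ uniqueness of the Brenier map together with the strict positivity of $[\frac{d\mu}{d\mathcal{L}^d}]$. Alternatively one can argue variationally, maximizing the concave Kantorovich dual $\psi\mapsto\int_{\Reals^d}\min_i(|x-x_i|^2-\psi_i)\,d\mu(x)+\sum_i\lambda_i\psi_i$ over $\psi\in\Reals^n$: its critical points are precisely the weight vectors for which $\mu(C_i)=\lambda_i$, with the finite second moment and the positive density supplying, respectively, coercivity and uniqueness.

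The step I expect to be the main obstacle is the structural claim in the second paragraph --- passing from ``$\nabla\varphi$ takes finitely many values almost everywhere'' to ``$\varphi$ is a finite maximum of affine functions.'' One must handle the non-differentiability set of $\varphi$ carefully, rule out degenerate cells (impossible here exactly because each $x_i$ carries positive $\nu$-mass and $\mu$ has full support), and make precise the ``up to measure zero'' in the conclusion; these are the only places where absolute continuity and strict positivity of the density genuinely enter. The completion-of-the-square identity and the Fenchel-duality bookkeeping are routine.
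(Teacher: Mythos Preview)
Your proposal is correct and in fact more detailed than what the paper itself provides. The paper does not give a self-contained proof of this theorem: it attributes existence of the power-diagram solution to \cite{AHA} (linking it to a Minkowski-type theorem) or, alternatively, to Kantorovich duality, and it derives uniqueness of the diagram from the Brenier--McCann uniqueness of $T_\mu^\nu$ together with the strict-positivity hypothesis on $[\frac{d\mu}{d\mathcal{L}^d}]$, supplemented by Lemma~\ref{lem:radii-unique} (radii are determined up to a common shift). No structural analysis of the Brenier potential $\varphi$ appears in the paper.

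Your route --- extracting the power diagram directly from $\varphi$ by showing that a convex function whose gradient almost everywhere lies in $\{x_1,\dots,x_n\}$ must be a maximum of $n$ affine functions, then completing the square to read off the radii --- is a clean, self-contained derivation that the paper does not carry out. It has the advantage of making the geometry of the transport map explicit and of yielding existence and uniqueness in one stroke; the variational alternative you sketch at the end is exactly the Kantorovich-duality route the paper alludes to. The paper's treatment separates existence (left to the references) from uniqueness (argued via Brenier--McCann plus Lemma~\ref{lem:radii-unique}) and leans more heavily on the literature. Both approaches arrive at the same conclusion; yours simply fills in what the paper delegates to \cite{AHA,b,mac}.
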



By the Brenier-McCann theorem~\cite{b,mac} the transport map is unique up to a set of
$\mu$-measure zero. We will deduce that the corresponding power
diagram is also unique from the assumption that the Radon-Nikodym
derivative is strictly positive. The existence of such power diagram
is a result of \cite{AHA} (in a slightly different language), which
relates it to a classical theorem of Minkowski.  Alternatively, it can
be derived from Kantorovich duality.

Denote the power diagram corresponding to $\bar{x}$ and $\bar{r}$ by
$C(\bar{x},\bar{r})$ and the power diagram cell corresponding to $x_i$
by $C_i(\bar{x},\bar{r})$.  We refer to the hyperplane
$H_{ij}(\bar{x},\bar{r}):=\{x\in \Reals^d,f_i(x)=f_j(x) \}$ as the
\emph{bisector} of the points $x_i$ and $x_j$; it is perpendicular to
the line $x_ix_j$.  Finally, we denote the halfspace bounded
by $H_{ij}(\bar{x},\bar{r})$ and containing $C_i$ by
$H^+_{ij}(\bar{x},\bar{r}):=\{x\in \Reals^d,f_i(x)\leq f_j(x) \}$.
By a straightforward calculation, the directed distance from $x_i$ to
the point where the line $x_ix_j$ meets $H_{ij}(\bar{x},\bar{r})$
is given by
\begin{equation}
  \frac{|x_i-x_j|^2-r_j+r_i}{2|x_i-x_j|}. \label{eq1}
\end{equation}

\begin{lemma}
  \label{lem:radii-unique}
 The power diagrams $C(\bar{x},\bar{r})$ and $C(\bar{x},\bar{r}')$ are equal if and only if, for some
  $c\in \Reals$, $r_i'=r_i+c$, for all $i=1,\ldots,n$.
\end{lemma}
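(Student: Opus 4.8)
The plan: the ``if'' direction is immediate, since replacing every $r_i$ by $r_i+c$ replaces every function $f_i(x)=|x-x_i|^2-r_i$ by $f_i(x)-c$, leaving the minimizing index at each point — hence every cell — unchanged. For ``only if'', assume the two diagrams coincide; since they share the site list $\bar x$, this means $C_i(\bar x,\bar r)=C_i(\bar x,\bar r')$ for every $i$. Put $s_i:=r_i'-r_i$; the goal is to show that all the $s_i$ are equal.

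The core step is: if $C_i\cap C_j\neq\emptyset$ then $s_i=s_j$. Indeed, pick any $p$ in $C_i(\bar x,\bar r)\cap C_j(\bar x,\bar r)$; from $p\in C_i$ and $p\in C_j$ we get $f_i(p)\le f_j(p)$ and $f_j(p)\le f_i(p)$, so $f_i(p)=f_j(p)$, i.e.\ $p$ lies on the bisector $H_{ij}(\bar x,\bar r)$. Expanding $|p-x_i|^2-r_i=|p-x_j|^2-r_j$ gives $2\langle p,x_j-x_i\rangle=|x_j|^2-|x_i|^2+r_i-r_j$. Because the cells for $\bar r'$ are the same sets, the same point $p$ lies in $C_i(\bar x,\bar r')\cap C_j(\bar x,\bar r')$, so likewise $2\langle p,x_j-x_i\rangle=|x_j|^2-|x_i|^2+r_i'-r_j'$; subtracting the two identities yields $r_i-r_j=r_i'-r_j'$, that is, $s_i=s_j$. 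Equivalently: $H_{ij}(\bar x,\bar r)$ and $H_{ij}(\bar x,\bar r')$ are hyperplanes of the \emph{same} orientation, both perpendicular to the radius-independent line $x_ix_j$, and both pass through $p$, hence they coincide; so they meet $x_ix_j$ at the same point, and equating the directed distances in~(\ref{eq1}) (the denominators $2|x_i-x_j|$ agree) gives the relation.

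It remains to propagate this. Assume that every cell is non-empty — this holds in the situation in which we apply the lemma, where $\nu$ puts positive mass on each site, so each $C_i$ has positive $\mu$-measure and in particular is non-empty. Form the graph on $\{1,\dots,n\}$ with an edge between $i$ and $j$ whenever $C_i\cap C_j\neq\emptyset$. If it were disconnected, with vertex classes $A,B\neq\emptyset$ and no edges between them, then $\bigcup_{i\in A}C_i$ and $\bigcup_{j\in B}C_j$ would be non-empty disjoint closed sets covering $\Reals^d$, contradicting its connectedness; hence the graph is connected. By the previous step $s_i=s_j$ along every edge, so all the $s_i$ equal one value $c$, giving $r_i'=r_i+c$ for every $i$. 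The only delicate points — and the main obstacle to a careless argument — are that touching cells need not meet in a facet, so the intersection $C_i\cap C_j$ alone does not determine the hyperplane $H_{ij}$ (this is why it is essential that the sites, and thus the orientation of $H_{ij}$, are fixed), and that the non-emptiness hypothesis is genuinely needed: an empty cell has a freely perturbable radius, so the statement is false without it.
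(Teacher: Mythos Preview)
Your proof is correct and follows essentially the same line as the paper's: both show that a shared boundary between two cells pins down $r_i-r_j$ (the paper via equation~(\ref{eq1}), you by direct subtraction of the two bisector equations at a common point), and then propagate along a connected adjacency graph on the cells. Your version is a bit more careful --- you use the coarser intersection graph (any nonempty $C_i\cap C_j$) rather than the paper's facet-adjacency graph, prove its connectivity explicitly from the connectedness of $\Reals^d$, and correctly flag that the lemma requires every cell to be nonempty, a hypothesis the paper leaves implicit.
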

\begin{proof}
  It is clear from the definition of the power diagram that shifting
  all radii by a common amount does not affect the diagram.
We now argue that the power diagram determines
  the radii, up to a shift.  Specifically, after arbitrarily setting
  $r_n$ to zero, we show how to reconstruct the remaining radii~$r_i$
  from the power diagram, given site positions $\bar{x}$.
Observe that the graph defined by the adjacencies of cells $C_i$
  across $(d-1)$-dimensional faces is connected. Since $x_i,x_j$ are
  fixed, equation~\ref{eq1} determines the difference
  $r_i-r_j$ for every pair of neighboring cells $C_i$, $C_j$.
  Starting with $r_n=0$, we can uniquely reconstruct all the radii.
\end{proof}

Returning to the transport plan, since $[\frac{d\mu}{d\mathcal{L}^d}]$ is strictly positive, the transformation $T_\mu^\nu$ is unique up to a set of Lebesgue measure zero. The cells of the power diagram are closed sets that coincide with the sets $(T_\mu^\nu)^{-1}(x_i)$ up to a set of measure zero. This uniquely determines the power diagram which by the previous
lemma uniquely determines the radii (up to the normalizing condition
$r_n=0$).

\section{The Idea}
Now we sketch our strategy.  Let $\bar{x}=\langle x_1,x_2, \ldots x_n\rangle$
be an ordered $n$-tuple of pairwise distinct points and $K$ a convex
body. The solution of the optimal transport problem in which the source measure is (an absolutely continuous
approximation of) the volume of~$K$, and the target measure is $\nu_{\bar{x}}=\frac{1}{n}\sum \delta_{x_i}$,
induces a convex partition of $K$ into convex sets of equal
volume. As the points $\langle x_1,x_2, \ldots x_n\rangle$ move, the
surface areas of the corresponding cells change continuously.
We want to argue that for some $n$-tuple they are all equal. To accomplish that, we rely on equivariant topology.
More precisely, consider a map taking an $n$-tuple of distinct points in
$\Reals^d$ to an $n$-tuple of numbers which in coordinate~$i$ takes the value
of the $(d-1)$-dimensional surface area of the intersection of $K$
with the cell $C_i$.  Since the measure $\nu_{\bar{x}}$ is $\Sigma_n$-equivariant, this map is also $\Sigma_n$-equivariant, where $\Sigma_n$ denotes the group of permutations of $n$ elements.
Our result will follow by showing that for any convex body the image
of this map cannot miss the diagonal. We will
derive it from an appropriate Borsuk--Ulam-type statement.

In fact, the previous argument made little reference to surface area.
theorem~\ref{main} will be derived from theorem~\ref{extra}.

Let $\overline{P_2^{{ac}}(\Reals^d)}$ be the closure of $P_2^{{ac}}(\Reals^d)$ under the Wasserstein distance (equivalently, this is the closure under the classical weak topology on measures). 



Since the functionals $G_j$ in theorem~\ref{extra} are assumed to be continuous with respect to
the Hausdorff metric, it suffices to show the theorem for measures in
$P_2^{{ac}^+}(\Reals^d)$ to obtain the result for nice measures. In fact theorem ~\ref{extra} holds for any measure in $\overline{P_2^{{ac}}(\Reals^d)}$ if we substitute the equalities $\mu(K_i)=\frac{\mu(K)}{p^k}$ by inequalities $\mu(K_i)\geq \frac{\mu(K)}{p^k}$ and $\mu(\Int(K_i))\leq \frac{\mu(K)}{p^k}$ where  $K_i$ is topologically closed and $\Int(K_i)$ denotes its interior. When $\mu$ is a nice measure we further know that for every hyperplane $H$, $\mu(H)=0$, by definition of power diagrams we can conclude that $\mu(K_i)=\mu(\Int(K_i))$, and so $\mu(K_i)=\frac{\mu(K)}{p^k}$.

Theorem~\ref{main} follows by setting $G_1$ to be the surface area and $\mu$ to be the measure \[
  \mu(A):=\int_A 1_K d\mathcal{L}^d,
\]
where $A$ is
a measurable set. This measure is nice. It gives zero measure to every hyperplane.  It has a second moment and can be approximated by absolutely continuous measures.
Natural candidates for $G_i$ are given by  the  width, the diameter, or a Minkowski functional. 
Another interesting collection of examples in which theorem~\ref{extra} applies arises in the following fashion. Take a continuous function $g\colon \Reals^d \to \Reals^{l}$, with $d>l$, choose a \emph{centermap} $c$, that is, a continuous functional $c\colon\mathcal{K}^d \to \Reals^d$, and define $G(K):=g(c(K))$. The most obvious choice for $c$ is the barycenter, but  other centers are of interest.
\begin{coro}
Given a nice measure~$\mu$ on $\Reals^d$, a convex body $K\in \mathcal{K}^d$, a prime $p$, an integer $k$, a
function $g\colon\Reals^d \to \Reals^l$, with $d>l$ and a continuous 
centermap $c$. There exist $p^i$ convex sets $K_1,K_2, \ldots K_{p^k}$, such
that
\[
  \mu(K_i)=\frac{\mu(K)}{p^k},
\]
for all $i$, and
\[
  g(c(K_1))=g(c(K_2))=\ldots=g(c(K_{p^k})).
\]
\end{coro}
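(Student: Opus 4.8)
The plan is to derive the corollary directly from Theorem~\ref{extra} by an appropriate choice of the $d-1$ continuous functionals $G_1,\ldots,G_{d-1}$. The map $K\mapsto g(c(K))$ takes values in $\Reals^l$, so it has $l$ real components $(g\circ c)_1,\ldots,(g\circ c)_l$, each of which is a continuous functional on $\mathcal{K}^d$: indeed $c\colon\mathcal{K}^d\to\Reals^d$ is continuous by hypothesis, and $g\colon\Reals^d\to\Reals^l$ is continuous, so the composition is continuous in the Hausdorff metric. Since $l<d$, we have $l\le d-1$, so these $l$ functionals fit inside the budget of $d-1$ functionals allowed by Theorem~\ref{extra}.

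\medskip

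Concretely, I would set $G_j:=(g\circ c)_j$ for $j=1,\ldots,l$, and if $l<d-1$ pad the remaining slots by setting $G_{l+1}=\cdots=G_{d-1}$ to be any fixed continuous functional (for instance, the constant functional $0$, or the volume, or the first coordinate of the barycenter); the choice is irrelevant since the corresponding equalities $G_j(K_1)=\cdots=G_j(K_{p^k})$ are then automatically satisfied or harmless. Applying Theorem~\ref{extra} to $\mu$, $K$, $p$, $k$ and this family $G_1,\ldots,G_{d-1}$ produces a partition of $K$ into $p^k$ convex bodies $K_1,\ldots,K_{p^k}$ with $\mu(K_i)=\mu(K)/p^k$ for all $i$ and $G_j(K_1)=\cdots=G_j(K_{p^k})$ for all $j$. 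Reading off the first $l$ of these equalities gives $(g\circ c)_j(K_1)=\cdots=(g\circ c)_j(K_{p^k})$ for $j=1,\ldots,l$, which is precisely the vector equality $g(c(K_1))=\cdots=g(c(K_{p^k}))$ in $\Reals^l$. Together with the volume condition, this is exactly the assertion of the corollary.

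\medskip

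There is essentially no obstacle here beyond bookkeeping: the entire content is the observation that $l<d$ forces $l\le d-1$, so the $l$ scalar constraints imposed by $g\circ c$ can be accommodated among the $d-1$ functional slots of Theorem~\ref{extra}, and that continuity of the composite functional is inherited from continuity of $g$ and of the centermap $c$. The only point worth a sentence in the write-up is to confirm that a centermap, being by definition continuous with respect to the Hausdorff metric on $\mathcal{K}^d$, does make each $(g\circ c)_j$ an admissible functional; once that is noted, the corollary is an immediate specialization of Theorem~\ref{extra}. (One could also remark that the barycenter is indeed a valid centermap, so the statement is non-vacuous.)
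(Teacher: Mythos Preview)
Your proposal is correct and matches the paper's approach exactly: the paper does not give a separate proof of this corollary but presents it as an immediate instance of Theorem~\ref{extra}, obtained by setting $G(K):=g(c(K))$ (the paragraph immediately preceding the corollary says precisely this). Your write-up simply spells out the bookkeeping---taking coordinate components and padding with dummy functionals when $l<d-1$---that the paper leaves implicit.
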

By taking $g$ to be a 
linear map it is easy to see that theorem~\ref{extra} is quantitatively best possible.
This corollary is similar to the Gromov-Borsuk-Ulam theorem of
\cite{gro}; see also \cite{mem}. In fact,  Gromov's proof was the
starting point for Karasev \cite{k10}. 
Denote by  $\mathcal{K}(\Sphere^d)$ the space of strictly convex sets of the sphere with the Hausdorff metric. A subset of the sphere is \emph{strictly convex} if, for every pair of points in the set, all the minimizing geodesics between them are also contained in the set.
\begin{theorem}[\citenoadjust{\cite{gro}}]
Let $m$ be the Haar measure on the sphere normalized to be a
probability measure. Given a continuous
function $g\colon\Sphere^d \to \Reals^l$, with $d>l$, and a continuous 
centermap $c\colon \mathcal{K}(\Sphere^d)\to \Sphere^d$, there exists a partition of the sphere into $2^k$ convex sets $K_1,K_2, \ldots K_{2^k}\subset \Sphere^{d}$, such
that
\[
  m(K_i)=\frac{1}{2^k},
\]
for all $i$, and
\[
  g(c(K_1))=g(c(K_2))=\ldots=g(c(K_{2^k})).
\]
\end{theorem}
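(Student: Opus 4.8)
The plan is to run the same kind of equivariant test‑map scheme that yields theorem~\ref{extra}, but with the configuration space of power diagrams replaced by one of binary‑space‑partition trees by great hyperspheres (the binary structure matches the restriction to $N=2^{k}$ pieces), and with the requirement that the pieces have equal Haar measure folded into the test map rather than built into the configuration space — this is what keeps the latter simple.

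Let $\mathcal{C}$ be the space of binary space partitions of $\Sphere^{d}$ along a rooted binary tree of depth $k$: a great hypersphere splits $\Sphere^{d}$ into two closed hemispheres, each hemisphere is split by its own great hypersphere, and so on for $k$ levels. Since a great hypersphere is determined by its normal direction, $\mathcal{C}\cong(\Sphere^{d})^{\,2^{k}-1}$, with one factor for each of the $2^{k}-1$ internal nodes, so $\dim\mathcal{C}=d(2^{k}-1)$. The group $G=\ZZ/2\wr\cdots\wr\ZZ/2$, the $k$‑fold iterated wreath product, i.e.\ the automorphism group of the tree, acts on $\mathcal{C}$ by antipodally reflecting the factor of an internal node while transposing its two subtrees, compatibly with the induced action on the $2^{k}$ leaves. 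Every point of $\mathcal{C}$ yields a partition $\Sphere^{d}=K_{1}\sqcup\cdots\sqcup K_{N}$ whose pieces are intersections of $\Sphere^{d}$ with the $k$ closed hemispheres lying along a root‑to‑leaf path; each $K_{i}$ is therefore geodesically convex, being an intersection of geodesically convex sets, and $K_{i}$ together with $m(K_{i})$ depends continuously on the point of $\mathcal{C}$ in the Hausdorff metric.

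Now define $\tau\colon\mathcal{C}\to(\Reals^{1+l})^{N}$ coordinatewise by
\[
  \tau(\xi)_{i}=\Bigl(\,m(K_{i}(\xi))-\tfrac1N,\ \ g(c(K_{i}(\xi)))-\tfrac1N\textstyle\sum_{j}g(c(K_{j}(\xi)))\,\Bigr).
\]
This map is continuous and, since $G$ merely permutes the pieces, $G$‑equivariant. As the $K_{i}$ partition $\Sphere^{d}$ the first coordinates sum to zero, and by construction the last $l$ coordinates sum to zero, so $\tau$ takes values in the subrepresentation $W':=\{(\mu_{i},v_{i})_{i}:\sum_{i}\mu_{i}=0,\ \sum_{i}v_{i}=0\}$, which is $(1+l)$ copies of the standard representation of $\Sigma_{N}$ restricted to $G$, of real dimension $(1+l)(N-1)$. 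A zero of $\tau$ is exactly a partition with $m(K_{i})=1/N$ for every $i$ and $g(c(K_{1}))=\cdots=g(c(K_{N}))$, so it suffices to show $\tau$ cannot avoid the origin. If it did, then $\tau/\lvert\tau\rvert$ would be a $G$‑equivariant map $\mathcal{C}\to S(W')$, equivalently a nowhere‑zero section of the vector bundle $\mathcal{C}\times_{G}W'\to\mathcal{C}/G$; but the mod‑$2$ Euler class of that bundle is non‑zero — this is precisely the Borsuk--Ulam‑type statement of theorem~\ref{top} (the Karasev--Vasil'ev--Fuks theorem) — so no such section exists. Observe that the hypothesis $d>l$ is exactly the inequality $\dim W'=(1+l)(N-1)\le d(N-1)=\dim\mathcal{C}$ that allows this Euler class to be nonzero. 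Hence $\tau$ has a zero and the theorem follows.

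Two points need care. First, $\mathcal{K}(\Sphere^{d})$ was defined to consist of \emph{strictly} convex sets, whereas the pieces above are only guaranteed geodesically convex; a spherical polytope is strictly convex exactly when it lies in an open hemisphere, and this fails on a proper closed subset of $\mathcal{C}$. One deals with this by replacing $c$, if necessary, by a continuous extension to all nonempty closed geodesically convex subsets of $\Sphere^{d}$ (possible because such sets are Hausdorff‑approximable by strictly convex ones, and because a zero of $\tau$ has pieces of positive measure, hence nonempty interior), and, if one insists on the literal statement, recovering strictly convex pieces through a small perturbation of the cutting data or a limiting argument — exactly as theorem~\ref{extra} is first established for positive absolutely continuous measures and only afterwards extended to nice measures by weak approximation. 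Second, one must check that $\tau$ remains continuous where cuts degenerate and a piece collapses, which follows from the absolute continuity of $m$ together with the extension of $c$ just described. The only genuine obstacle is the equivariant‑topological input, namely the non‑vanishing of the Euler class of $\mathcal{C}\times_{G}W'$, i.e.\ theorem~\ref{top}; everything else is a faithful transcription of the argument behind theorem~\ref{extra}, and the relationship of the present statement to the Gromov--Borsuk--Ulam theorem is spelled out in \cite{gro} and \cite{mem}.
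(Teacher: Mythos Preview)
Your approach is genuinely different from the paper's. The paper does not give a detailed proof of this statement; it offers two one-line sketches: (a) embed $\Sphere^{d}\subset\Reals^{d+1}$ and apply theorem~\ref{extra} with the extra functional approximating a delta mass at the origin, or (b) run optimal transport intrinsically on the sphere with cost $-\sin(d_{\Sphere^d})$ and invoke a configuration-space Borsuk--Ulam statement from~\cite{k09}. You instead reproduce Gromov's original argument via recursive bisection by great hyperspheres and the iterated wreath product $G=(\ZZ/2)^{\wr k}$, which the paper mentions only in passing (inside the proof of theorem~\ref{top}) as the alternative route in \cite{gro},~\cite{mem}. What this buys you is that the equal-measure constraint becomes one more coordinate of the test map rather than something enforced by optimal transport, so lemmas~\ref{lemma1}--\ref{lemma2} are unnecessary; what it costs is that the partition is a priori only a BSP by great hyperspheres rather than a general spherical power diagram.

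There is, however, a real gap in your topological step. You appeal to theorem~\ref{top} for the non-vanishing of the relevant Euler class, but theorem~\ref{top} is a statement about $\Sigma_{n}$-equivariant maps out of $F_{n}(\Reals^{d})$, not about $G$-equivariant maps out of $(\Sphere^{d})^{2^{k}-1}$; the domains, the groups, and the bundles are all different, and theorem~\ref{top} says nothing about your $\mathcal{C}\times_{G}W'$. What you actually need is the separate computation that the top Stiefel--Whitney class of this bundle is nonzero in $H^{*}(\mathcal{C}/G;\ZZ/2)$, proved by induction on $k$ using $G_{k}=\ZZ/2\wr G_{k-1}$ with the classical Borsuk--Ulam theorem as the base case; this is exactly the argument the paper alludes to (``amenable to induction on the depth of the tree'') and is carried out in \cite{gro} and \cite{mem}. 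Once you cite the correct input, your scheme is sound; the remaining caveats you flag (extending $c$ off strictly convex sets, continuity at collapsed pieces) are genuine but routine --- the cleanest fix is to weight the $g\circ c$ component by $m(K_{i})$ so that it vanishes continuously as a piece degenerates, while the measure component then keeps $\tau$ away from zero there.
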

It is not hard to obtain this theorem as a corollary to theorem ~\ref{extra} by embedding the sphere in $\Reals^{d+1}$ and setting the extra degree of freedom of the functional $G$ to approximate a delta mass at the origin.
Alternatively one can work directly with the intrinsic round metric of the sphere $d_{\Sphere^d}$ with the cost function $c(x,y)=-\sin(d_{\Sphere^d}(x,y))$ and a Borsuk-Ulam type statement for configuration spaces on manifolds of ~\cite{k09}.

The following result is a generalization of the Ham Sandwich theorem, which corresponds to the case $n=2$.
\begin{coro}[\citenoadjust{\cite{k10,s}}]\label{sob} Given $d$ nice probability measures $\mu_0, \mu_1,\ldots \mu_{d-1}$ on $\Reals^d$, and \emph{any} number $n$ there is a partition of $\Reals^d$ into convex regions $K_1,K_2, \ldots K_n$ with $\mu_i(K_j)=\frac{1}{n}$ for all $i$ and $j$ simultaneously.\end{coro}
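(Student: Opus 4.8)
The plan is to derive the corollary from Theorem~\ref{extra} by using $d-1$ of the $d$ given measures as the continuous functionals $G_j$, and then to reduce the case of an arbitrary $n$ to the prime‑power case by iterating over a prime factorization of $n$.

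First I would settle $n=p^k$. Take the measure to be equipartitioned to be $\mu_0$, and take $G_j(C):=\mu_j(C)$ for $j=1,\dots,d-1$. The partition produced by Theorem~\ref{extra} is a power diagram, so its cells $C_1,\dots,C_{p^k}$ are convex; and since $\sum_{l}\mu_j(C_l)=\mu_j(\Reals^d)=1$ with all $\mu_j(C_l)$ equal, each equals $1/p^k$, while $\mu_0(C_l)=1/p^k$ by construction. (For $p^k=2$ this is exactly the Ham Sandwich theorem, via a hyperplane cut.) The subtlety is that Theorem~\ref{extra} requires the $G_j$ to be continuous on all of $\mathcal{K}^d$, whereas $C\mapsto\mu_j(C)$ can fail to be continuous when the nice measure $\mu_j$ concentrates on the boundary of some convex body. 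I would handle this exactly as in the reduction preceding Theorem~\ref{main}: weakly approximate each $\mu_i$ (including $\mu_0$, which the transport machinery needs in $P_2^{{ac}^+}$ anyway) by absolutely continuous measures with a second moment --- for an absolutely continuous $\mu_j$ the functional $C\mapsto\mu_j(C)$ is continuous on $\mathcal{K}^d$, since $\partial C$ is Lebesgue‑null and Hausdorff convergence of convex bodies forces the indicator functions to converge pointwise off the boundary of the limit --- solve the problem for each approximation, and pass to a limit; the limiting cells are again cut out by finitely many hyperplanes, to which every nice $\mu_i$ gives zero mass, so the equalities survive.

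For general $n$ I would induct on the number of prime factors of $n$, counted with multiplicity. Writing $n=p\cdot n'$ with $p$ prime, first use the prime case to cut the ambient region into $p$ convex cells $P_1,\dots,P_p$, each carrying a $1/p$ share of every $\mu_i$; then, inside each $P_l$, apply the induction hypothesis to the restrictions $n'\cdot(\mu_i|_{P_l})$ to subdivide $P_l$ into $n'$ convex pieces each carrying a $1/(pn')=1/n$ share of every $\mu_i$, and concatenate. The restrictions are again nice probability measures, because $P_l$ is an intersection of finitely many halfspaces, so $\partial P_l$ lies in a finite union of hyperplanes and is null for every nice $\mu_i$. This step needs the constructions above to run over an arbitrary convex region rather than just $\Reals^d$ or a ball: the equivariant‑topology input is insensitive to this, and the optimal‑transport input (Theorem~\ref{AHA}) applies to any source measure supported on a full‑dimensional convex set with positive density on its interior.

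The main obstacle I anticipate is analytic rather than topological: everything must be run on unbounded regions, starting with the very first cut, which is a partition of all of $\Reals^d$. One must either verify that the proof of Theorem~\ref{extra} goes through verbatim with a nice measure on $\Reals^d$ in place of the volume of a bounded body, or recover the unbounded case as a limit of the bounded one --- intersecting with balls $B_\rho$ and letting $\rho\to\infty$ --- in which case the delicate point is a tightness argument guaranteeing that the $n$ sites do not escape to infinity or collide in the limit; this is where one uses that each $\mu_i$ is tight and assigns zero mass to every hyperplane.
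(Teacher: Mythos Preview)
Your proposal is correct and follows essentially the same route as the paper's primary argument (attributed there to Karasev): take $\mu_0$ as the measure in Theorem~\ref{extra}, set $G_j(A)=\mu_j(A)$, and iterate over the prime factorization of~$n$; you are in fact more careful than the paper's sketch about the continuity of $C\mapsto\mu_j(C)$ and about the passage from a bounded body to all of~$\Reals^d$. The paper also records, in an appendix, Sober\'on's alternative proof, which works on the larger configuration space $(\Reals^d)^p\setminus\Delta_d$ (so collisions are allowed), solves $d$ simultaneous transport problems, and needs only the $\ZZ_p$ Borsuk--Ulam theorem rather than the full strength of Theorem~\ref{top}.
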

This result was found independently by Karasev \cite{k10} and by Pablo Soberon \cite{s}. It was conjectured by Kaneko and Kano \cite{kk} who also proved the planar version. Soberon proof is an adaptation of our original argument for the case of volume and surface area to the case of several measures; the only algebraic topology tool it relies on is the Borsuk-Ulam theorem for $\ZZ_p$ actions.  
With the full power of theorem~\ref{extra}, Karasev's proof of this corollary is very simple and we now sketch it. 

Write $n=p_1^{\alpha_1} p_2^{\alpha_2}...p_k^{\alpha_k}$ and apply theorem~\ref{extra} with $\mu_0$ as the measure, with $G_i(A):=\mu_i(A)$ and ${p_1}^{\alpha_1}$. For each cell of the partition $K_j$, apply the theorem again to properly renormalized measures, $\mu_i '(A)\colon={p_1}^{\alpha_1} \int_{K_j} A d\mu_i$ to partition each cell into $p_2^{\alpha_2}$ subcells and continue in this manner.

In fact in this proof one can substitute $d-1$ of the measures by $d-1$ normalized continuous valuations with a mild generality condition.

\section{Continuity Lemmas} 

Let the \emph{configuration space} $F_n(\Reals^d)$ be the space $n$-tuples of  of
pairwise distinct labeled points in $\Reals^d$, i.e.,
$F_n(\Reals^d):=\{\langle x_1,x_2, \ldots x_n\rangle: x_i \in
\Reals^d,x_i\neq x_j\}$. Let $\bar{r}=\langle r_1, r_2, \ldots, r_n \rangle$ be an $n$-tuple of numbers.
The power diagram $C(\bar{x},\bar{r})$ has some unbounded cells, on the other hand our statements talk about a partition of a convex body $K$.
By the \emph{truncated} power diagram we mean the intersection of the power
diagram with the convex body $K$. A truncated power diagram can be
thought of as an element of 
\[
  (\mathcal{K}^d)^n:=\mathcal{K}^d \times \mathcal{K}^d \ldots \mathcal{K}^d
\]
with the $l_\infty$  metric, i.e., 
\[
  d((K_1,K_2, \ldots K_n), (K'_1,K'_2, \ldots K'_n))=\max_i
  d_H(K_i,K'_i),
\]
with $d_H$ the Hausdorff distance. 
Lemma~\ref{lemma1} below shows the continuity of the map 
\[
  C\colon F_n(\Reals^d)\times \Reals^{n-1} \to (\mathcal{K}^d)^n
\]
given by the truncated power diagram $C(\bar{x},\bar{r})$.

By  theorem~\ref{AHA},
for any $q\in \Reals^n$, $q_i \geq 0$, any
point $\bar{x} \in F_n(\Reals^d)$, and any absolutely continuous
probability measure $\mu$, provided $\sum_i q_i=\mu(K)$,  there exists a unique (up to a shift) $n$-tuple of radii
$\bar{r}=\langle r_1, r_2, \ldots, r_n \rangle$ such that cells of the
power diagram with sites at $\bar{x}=\langle x_1,x_2, \ldots
x_n\rangle$ and radii $\bar{r}$, truncated to within $K$, have
$\mu$-measures $\langle q_1,q_2, \ldots q_n \rangle$. Fixing $q$ and $\mu$, theorem~\ref{AHA} describes a map sending a point in $F_n(\Reals^d)$ to
a power diagram. We will show that this map is continuous. By lemma~\ref{lemma1} it is enough to show that the assignment of an $n$-tuple of radii $\bar{r}$ to each $\bar{x}$ 
is continuous (lemma~\ref{lemma2}). 
Incidentally, observe that the condition $\sum_i q_i=\mu(K)$ and the fact that we can assume
$r_n=0$ imply that both $\bar{r}$ and $q$ are $(n-1)$-dimensional.

Now we work out the details of lemma~\ref{lemma1}.
We will use the term
\emph{vanishing point} to refer to the points $(\bar{x},\bar{r})$ in
$F_n(\Reals^d)\times \Reals^{n-1}$ where the corresponding power diagram
has a \emph{vanishing cell}, that is, every open neighborhood of
$(\bar{x},\bar{r})$ contains points where the cell is nonempty, but at
$(\bar{x},\bar{r})$ it is empty.  Note that vanishing points form a
closed subset of $F_n(\Reals^d)\times \Reals^{n-1}$.  We will need the
following (easy) fact which is a special case of theorem~1.8.8 in
\cite{convexityBook}.
\begin{fact}
  \label{fact:convergence}
  If $A$ and $B$ are convex bodies, such that $A \cap B$ has non-empty
  interior, and the sequences $A_k$ and $B_k$ of convex sets converge, respectively,
  to $A$ and $B$, with respect to the Hausdorff metric, then $A_k \cap
  B_k$ converges to $A \cap B$.
\end{fact}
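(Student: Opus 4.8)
My plan is to prove this via set (Painlev\'e--Kuratowski) convergence, which, for a uniformly bounded family of nonempty compact sets, is equivalent to Hausdorff convergence; so it suffices to establish the two containments $\limsup_k(A_k\cap B_k)\subseteq A\cap B$ and $A\cap B\subseteq\liminf_k(A_k\cap B_k)$. The first is soft and does not use the interior hypothesis: if $x_{k_j}\in A_{k_j}\cap B_{k_j}$ and $x_{k_j}\to x$, then $d(x,A)\le d(x_{k_j},A_{k_j})+d_H(A_{k_j},A)\to 0$ and likewise $d(x,B)\to 0$, so $x\in A\cap B$. Uniform boundedness is equally soft: for large $k$, $A_k$ lies in the $1$-neighborhood of $A$ and $B_k$ in the $1$-neighborhood of $B$, so all the $A_k\cap B_k$ sit inside one fixed bounded set, and their intersection with the closed set $A\cap B$ being well-defined is immediate.

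The substance is the containment $A\cap B\subseteq\liminf_k(A_k\cap B_k)$, and this is exactly where I would use that $A\cap B$ has an interior point $p$; without it the claim is false (two externally tangent balls). First I would record the standard shrinking-ball lemma: if convex sets $C_k$ converge to $C$ in the Hausdorff metric and the closed ball $B_\rho(q)$ lies in $C$, then $B_{\rho/2}(q)\subseteq C_k$ for all large $k$. This is a one-line separation argument: a point of $B_{\rho/2}(q)$ lying outside the convex set $C_k$ would be cut off by a hyperplane with outer normal $u$ bounding $C_k$ by $\langle\cdot,u\rangle\le c$ with $c<\langle q,u\rangle+\rho/2$, while the point $q+\rho u\in C$ has a point of $C_k$ within $d_H(C_k,C)<\rho/2$ of it, forcing $c>\langle q,u\rangle+\rho/2$, a contradiction. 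Applying this with $q=p$ to both $A_k\to A$ and $B_k\to B$ already shows that $A_k\cap B_k$ is nonempty (indeed has nonempty interior) for all large $k$.

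Now fix $x\in A\cap B$ and choose $r>0$ with $B_r(p)\subseteq A\cap B$. For $t\in(0,1]$ put $x_t:=(1-t)x+tp$; by convexity $B_{tr}(x_t)=(1-t)\{x\}+tB_r(p)\subseteq A\cap B$. By the shrinking-ball lemma, for each fixed $t$ we have $x_t\in A_k\cap B_k$ once $k$ is large enough (depending on $t$). Since $x_t\to x$ as $t\to 0$, a diagonal choice --- take $t=1/m$ and an increasing sequence of thresholds $k_m$, and set $x_k:=x_{1/m}$ for $k_m\le k<k_{m+1}$ --- produces points $x_k\in A_k\cap B_k$ with $x_k\to x$, i.e.\ $d(x,A_k\cap B_k)\to 0$. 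Combining the two containments with uniform boundedness gives $d_H(A_k\cap B_k,\,A\cap B)\to 0$. I expect the $\liminf$ direction to be the only real obstacle: intersection is genuinely discontinuous in general --- the points of $A_k$ and of $B_k$ nearest to a given $x\in A\cap B$ need not be anywhere near one another --- and it is precisely the interior-point plus shrinking-ball trick that repairs this; everything else is routine.
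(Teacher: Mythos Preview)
Your argument is correct. The paper, however, does not actually prove this fact: it simply records it as ``a special case of theorem~1.8.8 in \cite{convexityBook}'' (Schneider's \emph{Convex Bodies: The Brunn--Minkowski Theory}) and moves on. So there is nothing to compare at the level of method --- you have supplied a self-contained elementary proof where the paper only cites.

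One small cosmetic point: in the $\limsup$ direction you wrote $d(x,A)\le d(x_{k_j},A_{k_j})+d_H(A_{k_j},A)$, which drops the term $|x-x_{k_j}|$; the intended chain is $d(x,A)\le |x-x_{k_j}|+d(x_{k_j},A)\le |x-x_{k_j}|+d_H(A_{k_j},A)\to 0$. This does not affect the validity of the argument.
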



\begin{lemma}\label{lemma1} 
  For any convex body $K$, the map $C$ that assigns to each point
  $(\bar{x},\bar{r}) \in F_n(\Reals^d)\times \Reals^{n-1}$ the convex
  partition corresponding to its truncated power diagram $C(\bar{x},\bar{r})$ is continuous
  whenever $(\bar{x},\bar{r})$ is not a vanishing point.  
\end{lemma}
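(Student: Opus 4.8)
The plan is to write each truncated cell as the intersection of the fixed convex body $K$ with finitely many closed halfspaces that move continuously with the parameters, and then to build the intersection up one halfspace at a time, invoking Fact~\ref{fact:convergence} at each step.

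First I would fix a non-vanishing point $(\bar{x},\bar{r})$ and an arbitrary sequence $(\bar{x}^{(m)},\bar{r}^{(m)})\to(\bar{x},\bar{r})$ in $F_n(\Reals^d)\times\Reals^{n-1}$. Since the target carries the $\ell_\infty$ product metric, it suffices to prove, for each fixed $i$, that $C_i(\bar{x}^{(m)},\bar{r}^{(m)})\cap K\to C_i(\bar{x},\bar{r})\cap K$ in the Hausdorff metric. By definition $C_i(\bar{x},\bar{r})\cap K=K\cap\bigcap_{j\neq i}H^+_{ij}(\bar{x},\bar{r})$. Each bisector $H_{ij}$ is the zero set of $2(x_j-x_i)\cdot x+\bigl(|x_i|^2-|x_j|^2+r_j-r_i\bigr)$, an affine function whose coefficients are polynomial in $(\bar{x},\bar{r})$ and whose gradient $2(x_j-x_i)$ is nonzero throughout $F_n(\Reals^d)$; hence $H^+_{ij}$ depends continuously on the parameters. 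Its location being controlled by \eqref{eq1}, I would then fix a closed ball $\Omega\supseteq K$ large enough that every $H_{ij}(\bar{x},\bar{r})$, $i\neq j$, crosses $\Int\Omega$, and shrink the neighborhood of $(\bar{x},\bar{r})$ so that this remains true; replacing each $H^+_{ij}$ by the convex body $H^+_{ij}\cap\Omega$ changes none of the truncated cells (as $K\subseteq\Omega$) but now exhibits each as a finite intersection of convex bodies varying continuously in the Hausdorff metric.

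Next I would handle a cell with nonempty interior by iterating Fact~\ref{fact:convergence}. Ordering the $n-1$ halfspaces as $H^+_{i,j_1},\dots,H^+_{i,j_{n-1}}$, set $A_0:=K$ and $A_t:=A_{t-1}\cap\bigl(H^+_{i,j_t}(\bar{x},\bar{r})\cap\Omega\bigr)$, so $A_{n-1}=C_i(\bar{x},\bar{r})\cap K$; write $A_t(\bar{x}',\bar{r}')$ for the analogous body at other parameters. Each $A_t$ contains $A_{n-1}$, hence has nonempty interior, and each $H^+_{i,j_t}\cap\Omega$ depends continuously on the parameters, so Fact~\ref{fact:convergence} gives $A_t(\bar{x}^{(m)},\bar{r}^{(m)})\to A_t$ by induction on $t$; at $t=n-1$ this is exactly the convergence of the truncated cell. (Note that no control on the dimension of the perturbed cells is needed: Fact~\ref{fact:convergence} asks for nonempty interior only of the limiting intersection.) A cell that is empty at $(\bar{x},\bar{r})$ is dealt with by the hypothesis: since $(\bar{x},\bar{r})$ is not a vanishing point, that cell stays empty on a whole neighborhood, so that coordinate is locally constant and there is nothing to prove.

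The remaining case — a cell nonempty at $(\bar{x},\bar{r})$ but with empty interior, so lying inside some bisector or inside $\partial K$ — is where I expect the main obstacle. Fact~\ref{fact:convergence} does not apply, and one has to rule out that a small perturbation of the parameters makes this cell jump (to a full-dimensional set, or to the empty set). I would attack it by compactness: the truncated cells $C_i(\bar{x}^{(m)},\bar{r}^{(m)})\cap K$ all lie in the fixed ball $\Omega$, so a subsequence converges in the Hausdorff metric, and passing to the limit in the defining inequalities $f_i\le f_j$ forces any subsequential limit to be contained in $C_i(\bar{x},\bar{r})\cap K$. Matching the two — that is, showing the limit is all of $C_i(\bar{x},\bar{r})\cap K$ and in particular nonempty — is precisely what the non-vanishing hypothesis must be used for, and making that step airtight (equivalently, pinning down the exact sense in which ``$(\bar{x},\bar{r})$ is not a vanishing point'' excludes a nonempty lower-dimensional cell being approached by empty ones) is the delicate part of the argument.
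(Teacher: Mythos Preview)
Your argument is essentially the paper's. The paper writes $C_i=\bigcap_{j\neq i}(H_{ij}^+\cap K)$, notes that each $H_{ij}^+\cap K$ (when nonempty) varies continuously with $(\bar{x},\bar{r})$ via~\eqref{eq1}, and then iterates Fact~\ref{fact:convergence}. Your auxiliary ball $\Omega$ is unnecessary: in the main case $C_i$ has nonempty interior, every partial intersection $A_t$ in your induction contains $C_i$, and hence each $H_{ij}^+\cap K\supseteq C_i$ is already a convex body with interior, so intersecting with $K$ rather than $\Omega$ suffices.

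The case you flag as delicate --- a truncated cell that is nonempty but has empty interior --- is exactly the step the paper does \emph{not} argue. Its proof simply asserts that at a non-vanishing point each $C_i$ ``is either identically empty in a neighborhood of $(\bar{x},\bar{r})$ or has non-empty interior'' and moves on; you have not overlooked an idea here, because the paper supplies none. (As your own analysis suggests, this dichotomy is not an immediate consequence of the stated definition of ``vanishing point''.) In the only place Lemma~\ref{lemma1} is actually invoked --- via Lemma~\ref{lemma2}, under the standing hypothesis $q_i>0$ for all~$i$ --- every truncated cell has positive $\mu$-measure and therefore nonempty interior, so the degenerate case never arises in the application.
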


\begin{proof}
  Slightly abusing the notation, put $C_i:= \bigcap_{j\in V-\{i\}}
  (H_{ij}^+ \cap K)$ (we will repeat this abuse without warning).  Each $H_{ij}^+ \cap K$, when non-empty, depends
  continuously on $(\bar{x},\bar{r})$, with the dependence given
  by~\eqref{eq1}.  Since $(\bar{x},\bar{r})$ is a non-vanishing point,
  $C_i$ is either identically empty in a neighborhood of
  $(\bar{x},\bar{r})$ or has non-empty interior.  Therefore, by
  repeated application of Fact~\ref{fact:convergence}, $C_i$ varies
  continuously with $(\bar{x},\bar{r})$.
 \end{proof}

Now we turn to lemma~\ref{lemma2} which follows from
the continuity of the dependence of the transport maps on the target
measure (known as stability of the transport map).

The space of $L_2$-transformations
\[
L_2(\Reals^d,\Reals^d,\mu)= \bigl\{T:\Reals^d \to
\Reals^d : \int |x-T(x)|^2 d\mu(x)<\infty\bigr\}
\]
 will be metrized by $d_{L_2(\mu)}(T_1,T_2)=(\int |T_1(x)-T_2(x)|^2 d\mu(x))^{1/2}$ and
the space of Borel probability measures with a second moment $\mathcal{P}_2(\Reals^d)$
by the Wasserstein distance.
 It is well known that for a fixed absolutely continuous
 source measure $\mu$, the map
  $\mathcal{T}\colon\mathcal{P}_2(\Reals^d) \to
  L_2(\Reals^d,\Reals^d,\mu)$ given by $\nu \to T_\mu^\nu$ is
  continuous, see \cite{v08}~and~\cite{ags}.

From this point on, we
assume that $q_i>0$, for all $i$. 

\begin{lemma}
  \label{lemma2}
 Let $r_{K,q} \colon F_n(\Reals^d) \to \Reals^{n-1}$
be the tuple of radii assigned to the points $\langle x_1, x_2 \ldots
x_n\rangle$ as in theorem~\ref{AHA}, so that
$\mu(C_i(\bar{x},\bar{r})\cap K)=q_i\mu(K)$. The map $r_{K,q}$ is continuous.
\end{lemma}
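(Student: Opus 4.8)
The plan is to prove continuity sequentially: I would show that whenever $\bar{x}^{(m)} \to \bar{x}$ in $F_n(\Reals^d)$, the radii $\bar{r}^{(m)} := r_{K,q}(\bar{x}^{(m)})$ converge to $r_{K,q}(\bar{x})$, keeping throughout the normalization $r_n = 0$ under which $r_{K,q}$ is single-valued (theorem~\ref{AHA}). The proof splits into (i) an a priori bound placing $\{\bar{r}^{(m)}\}_m$ in a compact subset of $\Reals^{n-1}$, and (ii) the identification of every subsequential limit of $\{\bar{r}^{(m)}\}_m$ with $r_{K,q}(\bar{x})$.

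For (i), I would argue by contradiction: if $\max_i|r_i^{(m)}| \to \infty$ along a subsequence, then since $r_n^{(m)} = 0$ and the sites lie in a compact subset of $F_n(\Reals^d)$, after passing to a further subsequence some fixed index $i^\star \neq n$ has $r_{i^\star}^{(m)} \to +\infty$ or $r_{i^\star}^{(m)} \to -\infty$. Writing $f_i^{(m)}(x) = |x - x_i^{(m)}|^2 - r_i^{(m)}$, the difference $f_{i^\star}^{(m)} - f_n^{(m)}$ is affine in $x$ and, on the bounded set $K$, equals a function bounded uniformly in $m$ minus $r_{i^\star}^{(m)}$. So for $m$ large either $f_{i^\star}^{(m)} < f_n^{(m)}$ throughout $K$, forcing $C_n(\bar{x}^{(m)},\bar{r}^{(m)}) \cap K = \emptyset$, or $f_n^{(m)} < f_{i^\star}^{(m)}$ throughout $K$, forcing $C_{i^\star}(\bar{x}^{(m)},\bar{r}^{(m)}) \cap K = \emptyset$; either contradicts $\mu(C_j(\bar{x}^{(m)},\bar{r}^{(m)}) \cap K) = q_j\mu(K) > 0$, which is where the standing hypothesis $q_j > 0$ for all $j$ (and the boundedness of $K$) enters. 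I expect this to be the main obstacle: it is the one place the positivity of $q$ is genuinely needed, and without it a cell could degenerate while its radius escapes to infinity.

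For (ii), let $\bar{r}'$ be a subsequential limit, $\bar{r}^{(m_l)} \to \bar{r}'$ (so $r_n' = 0$). The bisecting hyperplanes $H_{ij}(\bar{x},\bar{r})$ depend continuously on $(\bar{x},\bar{r})$ through~\eqref{eq1}, so for every $x$ outside the finitely many hyperplanes $H_{ij}(\bar{x},\bar{r}')$ the indicators $\mathbf{1}_{C_i(\bar{x}^{(m_l)},\bar{r}^{(m_l)})}(x)$ converge to $\mathbf{1}_{C_i(\bar{x},\bar{r}')}(x)$; that exceptional set is Lebesgue-null, hence $\mu$-null since $\mu$ is absolutely continuous, so dominated convergence gives $\mu(C_i(\bar{x}^{(m_l)},\bar{r}^{(m_l)}) \cap K) \to \mu(C_i(\bar{x},\bar{r}') \cap K)$ for each $i$. (Equivalently, one can route this step through stability of the transport map, as flagged before the lemma: the target measures $\frac{1}{\mu(K)}\sum_i q_i\delta_{x_i^{(m_l)}}$ converge in Wasserstein distance, so the optimal transport maps from the fixed absolutely continuous source obtained by restricting and normalizing $\mu$ on $K$ converge in $L_2$, whence the preimages of the atoms — the truncated cells, by theorem~\ref{AHA} — converge in measure.) Since the left-hand side above equals $q_i\mu(K)$ for every $l$, the limit satisfies $\mu(C_i(\bar{x},\bar{r}') \cap K) = q_i\mu(K)$ for all $i$; together with $r_n' = 0$, the uniqueness clause of theorem~\ref{AHA} (radii are determined up to an additive constant by the vector of truncated cell measures) forces $\bar{r}' = r_{K,q}(\bar{x})$. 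As every subsequential limit of the bounded sequence $\{\bar{r}^{(m)}\}_m$ equals $r_{K,q}(\bar{x})$, the full sequence converges to it, proving that $r_{K,q}$ is continuous.
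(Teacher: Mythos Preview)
Your proof is correct and follows the paper's two-step plan: (i) bound the radii using $q_i>0$ (the paper argues via eq.~\eqref{eq1} that if a bisector $H_{ij}$ misses $K$ then a truncated cell is empty; your direct comparison of $f_{i^\star}$ with $f_n$ on the bounded set $K$ is the same idea), then (ii) pass to a subsequential limit $\bar r'$ and identify it with $r_{K,q}(\bar x)$.

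The one real difference is in step (ii). The paper invokes stability of the optimal transport map (from \cite{v08,ags}): since $\nu^k\to\nu$, one has $T_\mu^{\nu^k}\to T_\mu^\nu$ in $L_2$, and then a direct computation shows $d_{L_2}(T_\mu^{r'},T_\mu^\nu)>0$ whenever $\bar r'\neq r_{K,q}(\bar x)$, a contradiction. Your primary route instead uses dominated convergence on the cell indicators to show that $C(\bar x,\bar r')$ already has the prescribed truncated cell $\mu$-measures, and then appeals to uniqueness of the radii. This is more elementary---it avoids the transport-stability black box---and in fact makes explicit a step the paper glosses over (namely that the maps along the subsequence converge in $L_2$ to the map associated with the limiting radii $\bar r'$, which is what one needs to close the paper's contradiction). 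You also flag the transport-stability route as an alternative, so you have effectively recovered both arguments.
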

\begin{proof}
 Without loss of generality we assume $\mu(K)=1$ and $supp(\mu) \subset K$. Recall that $r:=r_{K,q}$ maps $F_n(\Reals^d)$ to $\Reals^{n-1}$, and in
  both spaces we are using topologies homeomorphic to the one
  inherited from Euclidean distance. 
  By the characterization of theorem~\ref{AHA},
  lemma~\ref{lem:radii-unique}, and the surrounding discussion,
  $T_\mu^\nu$ is given by a power diagram with sites at
  $\mathit{supp}(\nu)$ and a unique tuple of radii with the
  normalizing condition $r_n=0$.

  Take a sequence $\bar{x}^k$ converging to $\bar{x}$. The measures
  $\nu^k=\sum_{x_i\in\bar{x}^k} q_i \delta_{x_i}$ converge to
  $\nu=\sum_{x_i\in\bar{x}} q_i \delta_{x_i}$ and therefore
  $T_\mu^{\nu^k}$ converges to $T_\mu^\nu$.  Each map uniquely defines
  a set of radii, so we have a sequence $r(\bar{x}^k)$.  Assume, for a
  contradiction, that it does not converge to $r(\bar{x})$. 
  Since $\bar{x}^k$ converges, for all $\delta>0$, 
  $|x_i^k -x_i|<\delta$, for all $i$ and large enough $k$.  If a bisector
  misses $K$ entirely, one of the truncated cells must be empty,
  contradicting our assumption that $q_i>0$.   Therefore, the numbers
  $|r_i^k|$ are bounded, by eq.~\eqref{eq1}.  
  
  Thus, a subsequence of $r(\bar{x}^k)$ converges to some $r'\neq
  r(\bar{x})$.  Abusing notation, denote by $T_\mu^{r'}$ the
  transformation associated with the power diagram
  $C(\bar{x},\bar{r}')$, which must differ from $C(\bar{x},\bar{r})$
  by lemma~\ref{lem:radii-unique}.  Then
  \begin{align*}
    \int |T_\mu^{r'} (x)-T_\mu^\nu(x)|^2 d\mu (x) & =\sum_{j,i}
    \mu(C_i(\bar{x},r')\cap
    C_j(\bar{x},r))|x_i-x_j|^2 \\ 
    & \geq |x_{i_0}-x_{j_0}|^2\mu(C_{i_0}(\bar{x},r')\cap
    C_{j_0}(\bar{x},r))>0,
  \end{align*}
  for some pair $(i_0,j_0)$, which contradicts the continuity of
  transport maps.
\end{proof}

\section{Equivariant Topology}

We are interested in the case $q=\frac {\mu(K)}{n}
(1,1,\ldots,1)$, i.e. $\nu = \frac{\mu(K)}{n} \sum_{x_i \in \bar{x}} \delta_{x_i}$, where $\delta_{x_i}$ is the delta mass at $x_i$. Consider  the map 
\begin{align*}
  P(\bar{x}):= 
  (&G_1(C_1(\bar{x},r(\bar{x}))), G_1(C_2(\bar{x},r(\bar{x}))), \ldots, G_1(C_n(\bar{x},r(\bar{x}))),\\
  &G_2(C_1(\bar{x},r(\bar{x}))), G_2(C_2(\bar{x},r(\bar{x}))), \ldots, G_2(C_n(\bar{x},r(\bar{x}))),\\
  &\qquad\cdots\\
  &G_{d-1}(C_1(\bar{x},r(\bar{x}))), G_{d-1}(C_2(\bar{x},r(\bar{x}))), \ldots, G_{d-1}(C_n(\bar{x},r(\bar{x})))),
\end{align*}
where $r(\bar{x})$ corresponds to the optimal transport as before.
We want to show that the map $P$
meets the $(d-1)$-dimensional diagonal,  
\begin{align*}
\Delta_{d-1}:=\{(&t_1,t_2, \ldots, t_{d-1},\\
&t_1 ,  t_2 ,\ldots, t_{d-1} ,\\
&\qquad\cdots \\
&t_1,t_2, \ldots, t_{d-1}): t_i\in \Reals\}.\end{align*} 
Since we chose $q=\frac {\mu(K)}{n}
(1,1,\ldots,1)$
this map is $\Sigma_n$-equivariant, where the action on $F_n(\Reals^d)$ relabels the points and the action on $\Reals^{n(d-1)}$
permutes blocks of $d-1$ coordinates.  Assuming that $P$ avoids 
$\Delta_{d-1}$ implies that we can equivariantly deformation retract it to a
$\Sigma_n$-equivariant map from $F_n(\Reals^d)$ to $\Reals^{(n-1)(d-1)}$, and the resulting map is nowhere zero. This strong
deformation retract is just an affine interpolation
between the identity and the orthogonal projection on the orthogonal complement of $\Delta_{d-1}$. Since we assumed that the map avoided $\Delta_{d-1}$, the image of our deformation retract is contained in $\Reals^{(d-1)(n-1)} \setminus \{0\}$. We can now deformation retract it to a sphere $\Sphere^{(n-1)(d-1)-1}$ by linearly interpolating with the map $x \to \frac{x}{|x|}$. We would reach a contradiction if we can prove that,
 \emph{there is no  $\Sigma_n$-equivariant map} $$\phi:F_n(\Reals^d)\to \Sphere^{(d-1)(n-1)-1}.$$
An easy way to achieve this result is to let $n$ be a prime smaller
than the dimension~$d$, restrict the action to a $\ZZ_n$ action and
apply Dold's theorem (theorem~\ref{d} on the appendix); this was our
approach before we were aware of the following theorem which is
contained in \cite{k09}. In fact, this result is basically contained
in the paper \cite{v} of Vasil'ev and the main idea of the proof goes
back to Fuks \cite{f} who solved the simplest case of $n=2^k$ and $d=2$.

\begin{theorem}\label{top} Let $p$ be a prime, $k\geq 1$ an integer, and $n=p^k$. For any $\Sigma_n$-equivariant map $f\colon F_n(\Reals^d) \to \Reals^{(d-1)(n-1)}$, there exists a configuration $\bar{x}\in  F_n(\Reals^d)$, such that $f(\bar{x})=\bar{0}$. \end{theorem}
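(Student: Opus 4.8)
The plan is to prove the contrapositive: if $f$ were nowhere zero it would yield a $\Sigma_n$-equivariant map to a sphere, and such a map cannot exist when $n=p^{k}$ because a suitable mod~$p$ Euler class survives to a quotient. First I would normalize: assuming $f(\bar{x})\neq\bar{0}$ for all $\bar{x}$, the map $\bar{x}\mapsto f(\bar{x})/|f(\bar{x})|$ is $\Sigma_n$-equivariant from $F_n(\Reals^d)$ to the unit sphere of $\Reals^{(d-1)(n-1)}$. Writing $\Reals^{n(d-1)}=\Reals^{n}\otimes\Reals^{d-1}$ with $\Sigma_n$ permuting the first factor and fixing the second, the diagonal $\Delta_{d-1}$ is $(\Reals^{n})^{\Sigma_n}\otimes\Reals^{d-1}$, so its orthogonal complement carries the representation $\bar{\rho}^{\oplus(d-1)}$, where $\bar{\rho}=\{v\in\Reals^{n}:\sum_i v_i=0\}$ is the reduced permutation representation. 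Hence it suffices to show there is no $\Sigma_n$-equivariant map $F_n(\Reals^d)\to S(\bar{\rho}^{\oplus(d-1)})$, writing $S(V)$ for the unit sphere of a representation $V$. The hypothesis $n=p^{k}$ enters now: index the $n$ labels by $G:=(\ZZ/p)^{k}$ and let $G\le\Sigma_n$ act on labels by translation. This action is free, $\bar{\rho}$ restricted to $G$ is the reduced real regular representation of $G$, and it is enough to rule out a $G$-equivariant map $F_n(\Reals^d)\to S(\bar{\rho}^{\oplus(d-1)})$.

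Next I would pass to vector bundles. Since $G$ acts freely on $F_n(\Reals^d)$, the quotient $B:=F_n(\Reals^d)/G$ carries a classifying map $\kappa\colon B\to BG$ for the principal $G$-bundle $F_n(\Reals^d)\to B$, and $\eta:=F_n(\Reals^d)\times_G\bar{\rho}=\kappa^{*}\eta_{\mathrm{univ}}$ is a rank-$(n-1)$ real vector bundle over $B$. A $G$-equivariant map $F_n(\Reals^d)\to S(\bar{\rho}^{\oplus(d-1)})$ is precisely a nowhere-zero section of $\eta^{\oplus(d-1)}$, and its existence forces the mod~$p$ Euler class $e(\eta)^{d-1}=e(\eta^{\oplus(d-1)})\in H^{(d-1)(n-1)}(B;\mathbb{F}_p)$ to vanish, i.e.\ forces $e(\eta_{\mathrm{univ}})^{d-1}$ to lie in $\mathrm{Ind}_G(F_n(\Reals^d)):=\ker\bigl(H^{*}(BG;\mathbb{F}_p)\to H^{*}(B;\mathbb{F}_p)\bigr)$. (For $p$ odd, give $\bar{\rho}$ a $G$-invariant complex structure, so $\eta$ is complex and $e$ is its top Chern class mod~$p$; for $p=2$, let $e$ denote the top Stiefel--Whitney class; either way a nowhere-zero section kills it.) So it suffices to prove the opposite:
\[ e(\eta_{\mathrm{univ}})^{d-1}\notin\mathrm{Ind}_G\bigl(F_n(\Reals^d)\bigr). \]

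Two ingredients finish this. The first is that $e(\eta_{\mathrm{univ}})^{d-1}$ is nonzero and is not a multiple of $e(\eta_{\mathrm{univ}})^{d}$ in $H^{*}(BG;\mathbb{F}_p)$: the class $e(\eta_{\mathrm{univ}})$ is the product of the Euler classes of the nontrivial character summands of the regular representation — one degree-$1$ class per nontrivial character if $p=2$, one degree-$2$ class per conjugate pair if $p$ is odd — and each of these is a nonzero element of the polynomial subalgebra $\mathbb{F}_p[t_1,\dots,t_k]\subseteq H^{*}(BG;\mathbb{F}_p)$, an integral domain; hence the product is nonzero of positive degree $n-1$, and a positive-degree element of a polynomial ring never divides a lower power of itself (the exterior generators, when $p$ is odd, play no role since all classes in sight lie in the polynomial part). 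This is the only step that uses primality of $p$: for composite $n$ no subgroup of $\Sigma_n$ makes this Euler class nonzero modulo any prime. The second ingredient — and the main obstacle — is that $\mathrm{Ind}_G(F_n(\Reals^d))$ is contained in the ideal $\bigl(e(\eta_{\mathrm{univ}})^{d}\bigr)$, so by the first ingredient it misses $e(\eta_{\mathrm{univ}})^{d-1}$. One containment is free: the barycentric map $\bar{x}\mapsto(x_i-\bar{c})_i$ is a $\Sigma_n$-equivariant map $F_n(\Reals^d)\to S(\bar{\rho}^{\oplus d})$, since the $x_i-\bar{c}$ are never all zero, so $\eta^{\oplus d}$ has a nowhere-zero section and $e(\eta_{\mathrm{univ}})^{d}\in\mathrm{Ind}_G(F_n(\Reals^d))$.

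The reverse inclusion is the hard core, and it is the Karasev--Vasil'ev--Fuks input. One feeds the known description of $H^{*}(F_n(\Reals^d);\mathbb{F}_p)$ as a $\Sigma_n$-module — obtained from the simplicial resolution of the discriminant and Alexander duality, the case $d=2$, $n=2^{k}$ going back to Fuks's computation of the $\mathbb{F}_2$-cohomology of the braid group, i.e.\ of the complement of the discriminant \cite{f}, the general case being \cite{v} and \cite{k09} — into the Serre spectral sequence of the Borel fibration $F_n(\Reals^d)\hookrightarrow EG\times_G F_n(\Reals^d)\to BG$ with $\mathbb{F}_p$ coefficients. Because $H^{*}(F_n(\Reals^d);\mathbb{F}_p)$ vanishes in degrees $1,\dots,d-2$, the edge class $e(\eta_{\mathrm{univ}})^{d-1}$ in $E_{2}^{(d-1)(n-1),\,0}$ can be hit only by differentials $d_r$ with $r\ge d$; tracking these against the $\Sigma_n$-module structure of $H^{*}(F_n(\Reals^d);\mathbb{F}_p)$, and using the relation $e(\eta_{\mathrm{univ}})^{d}=0$ already in hand, one verifies the class is never in their image. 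This is where essentially all of the work lies and where I expect the argument to be most delicate; the rest is bookkeeping. (In the easy special case $n=p<d$ one avoids this entirely: restrict the action to a cyclic subgroup $\ZZ/p$ and apply Dold's theorem, \ref{d}.)
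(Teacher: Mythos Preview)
Your outline is sound and ultimately lands on the same obstruction---a nonvanishing Euler class forced by the cohomology of configuration space---but the route differs substantially from the paper's. You restrict to the elementary abelian subgroup $G=(\ZZ/p)^k\le\Sigma_n$, package everything in the Fadell--Husseini index $\mathrm{Ind}_G(F_n(\Reals^d))\subset H^*(BG;\mathbb{F}_p)$, and aim to show $e(\eta_{\mathrm{univ}})^{d-1}\notin\mathrm{Ind}_G$ via the Serre spectral sequence of the Borel fibration. The paper instead deliberately avoids index theory and equivariant cohomology: it works with the full $\Sigma_n$-quotient, builds the Fuks--Vasil'ev cell decomposition of $(F_n'(\Reals^d)/\Sigma_n,pt)$ indexed by ordered labeled trees, exhibits an explicit transversal section (the ``forget the last coordinate'' map) whose zero locus is the unique bottom cell $e$, and then invokes Poincar\'e--Lefschetz duality (with $\ZZ_2$ coefficients for $p=2$, twisted $\widehat{\ZZ}$ coefficients for $p$ odd) to identify $[e]$ as dual to the Euler class. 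Nontriviality of $[e]$ comes from the explicit boundary formula $\partial(e_{n_1})=(-1)^{n_1}\binom{n}{n_1}$, which is divisible by $p$ for all $0<n_1<n$ exactly when $n=p^k$.

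What each buys: your approach is more conceptual and slots into a general machine, and the restriction to $(\ZZ/p)^k$ lets you work over $\mathbb{F}_p$ without twisted coefficients; the paper's approach is more self-contained and actually carries out the hard computation rather than deferring it. One caution: the ``reverse inclusion'' $\mathrm{Ind}_G(F_n(\Reals^d))\subseteq\bigl(e(\eta_{\mathrm{univ}})^{d}\bigr)$ that you isolate as the hard core is a stronger statement than what you strictly need (namely $e^{d-1}\notin\mathrm{Ind}_G$), and it is not quite what the cited sources state in that form; you would have to extract it from the spectral sequence argument yourself, and that extraction is roughly the same amount of work as the paper's direct homology calculation. Your parenthetical about $n=p<d$ via Dold's theorem matches the paper's own remark preceding the theorem.
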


Before going to the proof of this theorem we recall some generalities about Characteristic Classes, see \cite{mil}, \cite{bott}, and \cite{bre} for more details.

Given a vector bundle $\pi \colon E \to B$, with fiber $\Reals^m$, let $i:B \to E$ be the zero section $i(b):=(b,0_b)$ and  $E_0:=E \setminus i(B)$. The Thom class of the bundle $E\xrightarrow{\pi} B$ is a uniquely defined cohomology class $[u] \in H^m(E, E_0)$ that restricts to an orientation class on each fiber. Thom showed that for any vector bundle the Poincare Dual of the zero section with respect to the pair $(E, E_0)$ is the Thom class and moreover for any cohomology group, the map taking $[b] \in H^k(B)$ to $\pi^*[b] \cup [u] \in H^{k+m}(E,E_0)$ is an isomorphism (where $\pi^*[b]$ is thought as a cohomology class of the pair $(E,E_0)$ once we observe that we can pick a representative that vanishes outside a neighborhood of the zero section). A very useful invariant of a vector bundle is its \emph{Euler class} $\chi(E)$, a cohomology class in $H^m(B)$ defined as the image of the Thom class  induced by the inclusion $i\colon (B, \emptyset) \to (E, E_0)$, i.e. $\chi(E):=i^*([u]) \in H^m(B)$. Equivalently, $\chi(E)$ is the class that goes to $[u] \cup [u]\in H^{2m}(E,E_0)$ under the Thom isomorphism. We will use two properties of the Euler class:
\begin{enumerate}

\item[(P1)] If there is a nonvanishing section $s\colon B \to E$, then the Euler class $\chi(E)$ is trivial. 

\item[(P2)] If $s:B \to E$ is a transversal section and the base space $B$ is a manifold, then the homology class of the zero set $s^{-1}(0)$ is Poincare Dual to the Euler class. 
\end{enumerate}

In our case the base space is not a manifold but a version of property ~(P2) still holds; we clarify this issue below.
As before, let $\rho' \colon \Sigma_n \to O(n)$ be the standard representation of $\Sigma_n$ by permutation matrices. The diagonal $\Delta:=(t,t, \ldots t)$ is invariant under the induced action of $\Sigma_n$ on $\Reals^n$ and this representation splits into two irreducible representations, the diagonal~$\Delta$, and its orthogonal complement, denoted $\Delta^*:=\{(y_1, y_2, \ldots y_n): y_i \in \Reals, \sum y_i=0\}$.  We look at the irreducible representation on the orthogonal complement and denote it by $\rho \colon \Sigma_n \to O(n-1)$.
Associated to this representation there is an $(n-1)$-dimensional vector bundle $\eta$ given by $(\Delta^* \times F_n(\Reals^d))/ \Sigma_n  \to F_n(\Reals^d)/ \Sigma_n$, where the action on  $\Delta^*$ is the one induced by $\rho$. Any  $\Sigma_n$-equivariant map  $f:F_n(\Reals^d)\to \Delta^*$ corresponds to a section of $\eta$. Similarly, a $\Sigma_n$-equivariant map $f:F_n(\Reals^d)\to (\Delta^*)^{(d-1)}$ corresponds to a section of the $(d-1)$-fold Whitney sum of $\eta$ with itself. Theorem~\ref{top} is equivalent to the nonexistence of nowhere-zero sections of the vector bundle $\eta^{d-1}$. This will be derived from property~(P1) above, by showing that the Euler class $\chi(\eta^{d-1})$ (under the right choice of coefficients) is not zero, which in turn, will be derived from property~(P2).

\begin{proof}
We will describe a cell decomposition of the one-point
compactification of $F_n(\Reals^d)$, denoted by
$F_n'(\Reals^d):=F_n(\Reals^d) \cup \{pt\}$. This decomposition is labeled by elements of $\Sigma_n$ and the labeling is 
$\Sigma_n$-equivariant so it induces a cell decomposition of
$(F_n'(\Reals^d)/ \Sigma_n,pt)$ when we ``forget'' the labels. We will use these decompositions to perform
(co)homological calculations. These decompositions appear in Fuks paper \cite{f} for $d=2$ and in \cite{k09} in general. 
The reader is encouraged to examine figure~\ref{fig:tree} and skip the
next few paragraphs.
\begin{figure}
  \centering
  \includegraphics[scale=0.4]{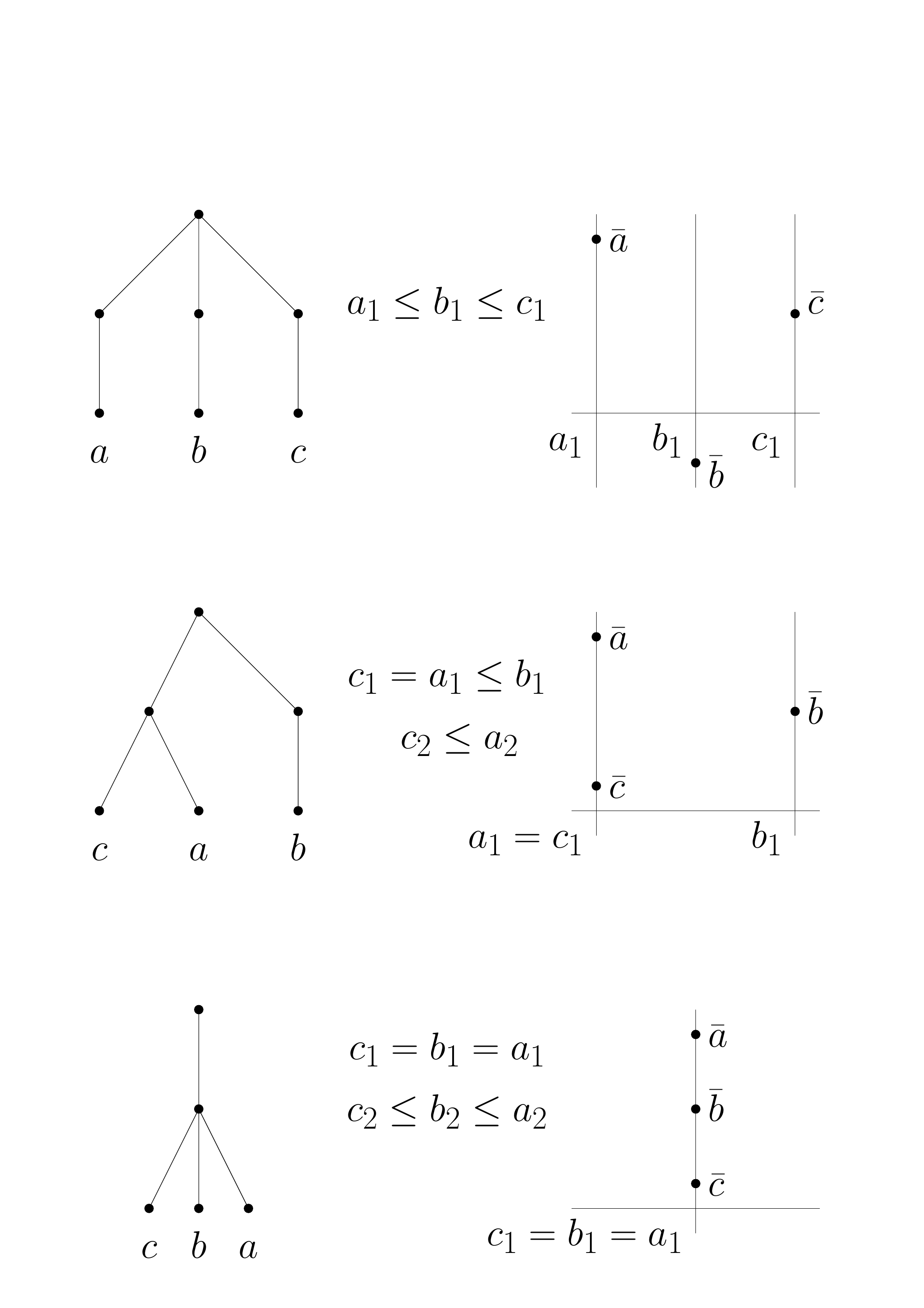}

  \caption{Trees and cells: three trees and
    matching example point configurations from the corresponding cells}
  \label{fig:tree}
\end{figure}

The cell decomposition of $F_n'(\Reals^d)$ has one $0$-cell that corresponds to the point at infinity. The remaining cells are in bijection with a family of ordered, labeled trees of the following form:
the height of the tree is $d$, in other words,  the tree
has $d+1$ levels including the root. Children of every node form a linearly ordered set. The tree has $n$ leaves, all of which
occur at the bottom level. Only the leaves are labeled and they are labeled with numbers $1$ through $n$ (we can think of these labels as an element of $\Sigma_n$).

The bijection between trees and cells of the decomposition is such that the dimension of the cell corresponding to the tree $T$ is $|T|-1$, where $|T|$ denotes the number of vertices in the tree $T$. The attaching maps will be defined implicitly. Instead we describe maps from the cells to configuration space. The following sets will be convenient to describe the maps, 
$$\Box^k:=\{(t_1, t_2, \ldots t_k) \in [-\infty,\infty]^k :
t_1\leq t_2\ldots \leq t_k\}.$$ Note that $\Box^k$ is homeomorphic to a closed $k$-dimensional ball. Denote by $deg(v)$ the number of children of the vertex $v$, for example a leaf has $deg(v)=0$.
For each tree $T$ we will define a continuous map $C_T \colon \Pi_{v \in V} \Box^{deg(v)} \to F_n'(\Reals^d)$, where $\Pi_{v \in V} \Box^{deg(v)}$ denotes the Cartesian product of $\Box^k$. To describe this map we recall a sorting algorithm that assigns a tree to each configuration. 

The root
(level~0) is associated with the entire configuration which is an $n$-tuple of points in $\Reals^d$. We
sort the points by their first coordinate; nodes on level~$1$
correspond to groups of points sharing the same coordinate.  For
example, if all points in the configuration have the same first coordinate, the root
has one child; if all first coordinates are different, the root has $n$
children, sorted in the order of coordinate values.  The construction
proceeds recursively: on level~$2$, we consider the set of points
associated with a node at level~$1$, split them into groups according
to the value of their second coordinate, sorted in increasing order,
and associate a level~$2$ node with each group.  Repeat the process
for each level, stopping at level~$d$, where we necessarily get a
total of $n$ leaves, as all points differ in at least one coordinate. Finally, label the leaves of the tree by the labels of the points of the configuration. This labeling of the leaves corresponds to the lexicographical order of the points of the configuration.
In a word, given two points $x_{i_1}$ and $x_{i_2}$ in a configuration, the closest common ancestor of the leaves labeled $i_1$ and $i_2$ represents the largest coordinate in which the two points coincide. 
With this sorting algorithm in mind, we return to describing the map $C_T$. In some sense, the inverse of the algorithm. 

There is a natural correspondence between the coordinates of $\Pi_{v \in V(T)} \Box^{deg(v)}$ and the vertices of $T$ minus the root. For each element $q \in \Pi_{v \in V(T)} \Box^{deg(v)}$  we think of an assignment of a real number to each vertex of the tree. For example, the first $deg(root)$ coordinates of $q$ are assigned to the children of the root respecting the order. Given this assignment we describe the configuration $C_T(q)$. The $j$-th coordinate of the $i$-th point of the configuration is the coordinate of $q$ assigned to the unique vertex of level $j$ on the path from the root to the leaf ~i. This process describes an element of $\Reals^{nd}$ corresponding to an element of $\Pi_{v \in V(T)} \Box^{deg(v)}$, for $F_n'(\Reals^d)$ to be the image, redefine this assignment to map all the elements that got mapped to $\Reals^{nd}\setminus F_n(\Reals^d)$ to the point at infinity . Is easy to check that we have defined a cell decomposition. Note that the boundary of the $(n+d-1)$- dimensional cell is the point at infinity. The elements at the boundary of any cell are those for which the inequality between two coordinates $t_{j_1}\leq t_{j_2}$ in one of the sets $\Box^{deg(v)}$ becomes equality or those for which some coordinate $t_j$ is $\pm \infty$. The construction guarantees that boundary points of cells of dimension larger than $n+d-1$ are mapped to the lower dimensional skeleton. This concludes the description of the cell decomposition.

This decomposition is $\Sigma_n$-equivariant, moreover, the only fixed point is the point at infinity. It induces a cell decomposition on the pair $(F_n'(\Reals^d)/\Sigma_n,pt)$, where $pt$ is the point at infinity. One can alternatively work with the pair $(F_n^\epsilon(\Reals^d)/\Sigma_n, \partial F_n^\epsilon(\Reals^d)/\Sigma_n)$ where $F_n^\epsilon(\Reals^d)/\Sigma_n$ is the space of unlabeled configurations such every two points are at distance at least $\epsilon$ from each other and each of them is at distance more than $\epsilon$ from the point at infinity. 

We will denote by $\bold{C_i}(F_n'(\Reals^d)/\Sigma_n,pt;R)$ the chain
complex corresponding to the cell decomposition of
$(F_n'(\Reals^d)/\Sigma_n,pt)$ with coefficients in $R$ (if we omit
specifying the coefficients we mean $R=\ZZ$). Forgetting the labels,
there is only one tree with $n+d$ vertices so $\bold{C_{n+d-1}}(F_n'(\Reals^d)/\Sigma_n,pt)$ has only one cell generator which we will denote by $e$. There are no trees with fewer vertices, so the kernel of the boundary map $\partial_{n+d-1}$ is $\bold{C_{n+d-1}}(F_n'(\Reals^d)/\Sigma_n, pt)$; the boundary of the cell $e$ is attached to the point at infinity. To compute the $(n+d-1)$-homology group we just need to understand what is the image of the boundary operator. The cells generating $\bold{C_{n+d}}(F_n'(\Reals^d)/\Sigma_n, pt)$ correspond to trees with one vertex of each rank from $0$ to $d-2$, two vertices of rank $d-1$ and $n$ vertices of rank $n$. Correspondingly these are configurations lying on the same $2$-plane with the points of the configuration divided into two groups, each group lying on a line, so $\bold{C_{n+d}}(F_n'(\Reals^d)/\Sigma_n,pt)$ has $n-1$ generators, one for each nontrivial integer solution of $n_1+n_2=n$. The points lie on lines $l_1$ and $l_2$ parallel to the last coordinate axis, we assume that $l_1$ is lexicographically before $l_2$, and we let $n_1$ be the number of points on $l_1$ and $n_2$ be the number of points on $l_2$. In this way, specifying the value of $n_1$ determines a generator of $\bold{C_{n+d}}(F_n'(\Reals^d)/\Sigma_n,pt)$, and the set of these $n-1$ generators forms a basis.  Since $\mathop{\mathrm{ker}} \partial_{n+d-1}=\bold{C_{n+d-1}}(F_n'(\Reals^d)/\Sigma_n,pt)$, understanding the $(n+d-1)$-homology group boils down to understanding $\mathop{\mathrm{Im}} \partial_{n+d}$, here the choice of coefficients will be crucial.

Now we exhibit a section $s_g$ of this bundle
that is transversal to the zero section and such
that the pull back of the zero section by $s_g$ is the cell
$e$. Consider the map $g \colon F_n(\Reals^d)\to \Reals^{n(d-1)}$ that
forgets the last coordinate of every point of the configuration.  This
map is clearly equivariant and so, as before, it induces a
section $$s_g \colon F_n(\Reals^d)/\Sigma_n \to (F_n(\Reals^d) \times
\Reals^{(d-1)(n-1)})/ \Sigma_n.$$ The manifold $s_g^{-1}(0)$
corresponds to the configurations for which all the points share the
first $d-1$ coordinates---this is precisely the generating cell in
$\bold{C_{n+d-1}}(F_n'(\Reals^d)/ \Sigma_n,pt)$. To use fact~(P2) above
and conclude that the Euler class is not trivial, observe that the
section is transversal to the zero section. Since the covering
$F_n(\Reals^d) \to F_n(\Reals^d)/ \Sigma_n$ is regular we can
equivalently show that the image of $(id,g) \colon F_n(\Reals^d) \to
F_n(\Reals^d) \times \Reals^{n(d-1)}$ is transversal to the image of
$(id, 0)$, moreover we can identify $F_n(\Reals^d)$ with its inclusion
in $\Reals^{nd}$. Now we have two linear maps and transversality
follows from counting dimensions. We can conclude that $s_g$ is transversal to the zero section. 
The rest of the argument consists of ensuring conditions so that we can invoke property ~(P2). Provided that the pair $(F_n'(\Reals^d), pt)$ represents the generator of the fundamental class in the homology $H_{nd}(F_n(\Reals^d), pt;R)$, the map $$H^{(n-1)(d-1)}(F_n'(\Reals^d)/\Sigma_n)\to H_{n+d-1}(F_n'(\Reals^d)/\Sigma_n,pt) $$ given by $[\alpha]\to [(F_n'(\Reals^d)/\Sigma_n,pt)]\cap [\alpha]$, sends the Euler class to the class~$[e]$ in homology. Here $\cap$ denotes the cap product and $[(F_n'(\Reals^d)/\Sigma_n,pt)]$ denotes the generator of the fundamental class of the pair. 

We need to show that, for each prime $p$, there is a choice of coefficients for which the class $[e]$ is not trivial and configuration space is orientable. Below, to keep track of orientations, we will give a sketch of the proof  of the version of property ~(P2) that we are using.
 
For $p=2$, we take coefficients in the field $\ZZ_2$. The boundary map
is given by $$\partial(e_{n_1})={n \choose n_1}.$$ This is easy to
see:  consider a configuration $\bar{x}$ in the cell $e$.  This is a
configuration of unlabeled points on a line $l$.  Every configuration is a regular value of the attaching map. Fix a cell $e_{n_1}\in C_{n+d}(F_n'(\Reals^d)/\Sigma_n, pt; \ZZ_2)$ and note that there are exactly ${n \choose n_1}$ configurations at the boundary of $e_{n_1}$ that map to $\bar{x}$, one for each splitting of $\bar{x}$ into two sets of $n_1$ on the left line and ${n-{n_1}}$ on the right line. Now, if $n=2^k$, then  $2\mid{n \choose n_1}$ and so the boundary map $\partial$ is the zero map in $\ZZ_2$ coefficients and so $H_{n+d-1}(F_n'(\Reals^d)/\Sigma_n,pt; \ZZ_2)=\ZZ_2$.
Moreover, since $F_n(\Reals^d)/\Sigma_n$ is a manifold, looking at the long exact sequence of the pair $(F_n'(\Reals^d)/\Sigma_n,pt)$ we obtain
$H^{nd}(F_n'(\Reals^d)/\Sigma_n,pt;\ZZ_2)=\ZZ_2$, and we can
conclude that theorem~\ref{top} holds for $n=2^k$; see \cite{f}.

There is an alternative approach to the case $p=2$ that can be found
in \cite{gro} and \cite{mem}. Instead of looking at the full group of
symmetries, restrict the action to the automorphism group of a
$3$-regular tree on $k$ levels.  This group sits naturally inside the
symmetric group. It is not hard to prove the inductive formula
$Aut(T_k)=\ZZ_2 \wr Aut(T_{k-1})$, where $\wr$ denotes the wreath
product. The Euler class with $\ZZ_2$ coefficients coincides with the
top Stiefel-Whitney class which is amenable to induction on the depth
of the tree. The base of the induction corresponds to the Borsuk-Ulam theorem (see of page 10 of \cite{mem}). 
For primes other than $2$ we need to use homology with twisted coefficients. The number of points is odd. For $d$ even, the manifold $F_n(\Reals^d)/\Sigma_n$ is orientable and the pair $(F_n'(\Reals^d), pt)$ represents a generator in the top homology. For $d$ odd, $F_n(\Reals^d)/\Sigma_n$ is nonorientable. We now recall the definition of homology with twisted coefficients and prove that when $F_n(\Reals^d)/\Sigma_n$ is nonorientable, the pair $(F_n'(\Reals^d), pt)$ represents a non-trivial homology class in the top dimension. This part of the proof works for any nonorientable manifold. If $F_n(\Reals)/\Sigma_n$ is not $\ZZ$-orientable, there is a connected two-sheeted covering space $F_n(\Reals^d)/A_n \to F_n(\Reals^d)/\Sigma_n$ with $\ZZ_2$ acting on it by deck transformations. This action of $\ZZ_2$ corresponds to the sign map $\Sigma_n \to \{+1,-1\}$, where $\{+1,-1\}$ has a multiplicative structure isomorphic to $\ZZ_2$. We denote the associated chain complex by $\bold{C_i}(F_n'(\Reals^d)/ A_n, pt)$. 
The nontrivial element $\tau \in \ZZ_2$ acts as an involution on $F_n(\Reals^d)/ A_n$, so it induces a natural $\ZZ[\ZZ_2]$ module structure on $\bold{C_i}(F_n'(\Reals^d)/ A_n, pt)$. Recall that the group ring $\ZZ[\ZZ_2]$ consists of the formal sums $\{a+b\tau \mid a,b \in \ZZ\}$, where the addition is given by $(a+b\tau)+(a'+b'\tau)=(a+a')+(b+b')\tau$ and the multiplication by $(a+b\tau)(a'+b'\tau)=(aa'+bb')+(ab'+a'b)\tau$. Note that the boundary maps $\partial$ become $\ZZ[\ZZ_2]$-module homomorphisms. 
The group $\ZZ_2$ acts on $\ZZ$, by $\tau(m)=-m$ for every $m \in \ZZ$. And so, it can also be given a $\ZZ[\ZZ_2]$-module structure by the rule $(a + b\tau)m=ma-mb$. 
Consider the $\ZZ[\ZZ_2]$-modules $\bold{C_i}(F_n'(\Reals^d)/ A_n,pt) \otimes_{\ZZ[\ZZ_2]} \ZZ$, where the action on $\bold{C_i}(F_n'(\Reals^d)/ A_n,pt)$ is induced by the deck transformation. We denote this module by $\bold{C_i}(F_n'(\Reals^d/\Sigma_n,pt);\widehat{\ZZ})$ and consider the operator $$\widehat{\partial}:=\partial \otimes 1 \colon \bold{C_i}(F_n'(\Reals^d)/\Sigma_n,pt;\widehat{\ZZ}) \to \bold{C_{i-1}}(F_n'(\Reals^d)/\Sigma_n,pt;\widehat{\ZZ}).$$ This is a boundary operator in the sense that $\widehat{\partial}^2=0$.  The corresponding homology groups are the homology groups with twisted coefficients $\widehat{\ZZ}$; we will denote them by $H_i(F_n'(\Reals^d)/\Sigma_n,pt;\widehat{\ZZ})$. 
Let us start showing that $(F_n'(\Reals^d)/\Sigma_n,pt)$ represents a generator for twisted coefficients whenever $F_n(\Reals^d)/\Sigma_n$ is not $\ZZ$-orientable, that is, $H_{nd}(F_n'(\Reals^d)/\Sigma_n, pt; \widehat{\ZZ})=\ZZ$ . Consider the following decomposition of $\bold{C_i}(F_n'(\Reals^d)/ A_n, pt;\widehat{\ZZ})$ into submodules: Let $\bold{C_i}^+(F_n'(\Reals^d)/ A_n, pt)$ be the submodule that consists of chains of the form $\sigma+ \tau(\sigma)$ and similarly $\bold{C_i}^-(F_n'(\Reals^d)/ A_n, pt)$ be the chains of the form $\sigma- \tau(\sigma)$ where $\sigma$ runs through all chains in $\bold{C_i}(F_n'(\Reals^d)/ A_n, pt;\ZZ)$. Note that this is just a decomposition into symmetric and antisymmetric parts with respect to $\tau$, i.e. $\tau(\sigma+ \tau(\sigma))=\sigma + \tau(\sigma)$ and  $\tau(\sigma- \tau(\sigma))=-\sigma+ \tau(\sigma)$. It is clear that $$\bold{C_i}(F_n'(\Reals^d)/ A_n,pt)=\bold{C_i}^+(F_n'(\Reals^d)/ A_n,pt)\oplus \bold{C_i}^-(F_n'(\Reals^d)/ A_n,pt).$$
Note that the following short sequences are exact: $$0 \to \bold{C_i}^+(F_n'(\Reals^d)/ A_n,pt) \to \bold{C_i}(F_n'(\Reals^d)/ A_n,pt) \xrightarrow{\phi_-} \bold{C_i}^-(F_n'(\Reals^d)/ A_n,pt) \to 0,$$ where $\phi_-(c):=c-\tau(c)$; and  $$0 \to \bold{C_i}^+(F_n'(\Reals^d)/ A_n,pt) \to \bold{C_i}(F_n'(\Reals^d)/ A_n,pt) \xrightarrow{\pi} \bold{C_i}(F_n'(\Reals^d)/ A_n,pt) \otimes_{\ZZ[\ZZ_2]} \ZZ \to 0,$$ where $\pi$ is the quotient map.
From this we can conclude that $\bold{C_i}^-(F_n'(\Reals^d)/ A_n,pt)$ and $\bold{C_i}(F_n'(\Reals^d)/ A_n,pt) \otimes_{\ZZ[\ZZ_2]} \ZZ$, are isomorphic as modules.
Putting this together with the short exact sequence $$0 \to \bold{C_i}^-(F_n'(\Reals^d)/ A_n,pt) \to \bold{C_i}(F_n'(\Reals^d)/ A_n,pt) \xrightarrow{\phi_+} \bold{C_i}^+(F_n(\Reals^d)/ A_n,pt) \to 0,$$ where $\phi_+(c):=c+\tau(c)$,  we obtain a long exact sequence $$\ldots \to H_i(F_n'(\Reals^d)/\Sigma_n,pt;\widehat{\ZZ})) \to H_i(F_n'(\Reals^d)/ A_n,pt;\ZZ) \to H_i(F_n'(\Reals^d)/\Sigma_n,pt;\ZZ) \to \ldots. $$ Substituting $i=nd$ we obtain an isomorphism $$H_{nd}(F_n'(\Reals^d)/ \Sigma_n,pt;\widehat{\ZZ}) \to H_{nd}(F_n'(\Reals^d)/ A_n,pt;\ZZ),$$ whenever $F_n(\Reals^d)/ \Sigma_n$ is not orientable, but $F_n(\Reals^d)/ A_n$ is orientable, so $H_{nd}(F_n'(\Reals^d)/\Sigma_n,pt;\widehat{\ZZ})=\ZZ$. 

In this generality, cup product with the fundamental class of the pair $(F_n'(\Reals^d)/\Sigma_n,pt)$ defines a homomorphism
$$H^{nd-k}(F_n(\Reals^d)/\Sigma_n;R) \to H_k(F_n'(\Reals^d)/\Sigma_n,pt;R \otimes or(F_n(\Reals^d)/\Sigma_n))$$  
where $R$ is a system of coefficients and $or(F_n(\Reals^d)/\Sigma_n)=\widehat{\ZZ}$ if $n$ and $d$ are odd and $or(F_n(\Reals^d)/\Sigma_n)=\ZZ$ if $n$ is odd and $d$ is even.  

Now we will show that for twisted coefficients 
\begin{equation}
  \label{eq:coeff}
  \partial(e_{n_1})=(-1)^{n_1} {n \choose n_1},
\end{equation}
where $e_{n_1}$ is the cell of configurations with $n_1$ points on $l_1$ and $n-n_1$ points on $l_2$, two standard lines on a standard plane. Assuming the formula, we see that similarly to the case $p=2$, if $n=p^k$ then, $p \mid  {n \choose n_1}$ (whenever $n_1 \neq 0, n$) and so $\mathop{\mathrm{Im}}\partial_{n+d}=p \ZZ$ and $[e] \in H_{n+d-1}(F_n'(\Reals^d),pt;\widehat{\ZZ})$ is not trivial. On the other hand, if $n$ is not of the form $p^k$, then this map is surjective and $H_{n+d-1}(F_n'(\Reals^d),pt;\widehat{\ZZ})$ is trivial. 
We now prove the claimed formula for the boundary map. The only difference with the $\ZZ_2$ case is that we have to be careful with possible cancellations of the attaching map. The cell $e \in \bold{C_{n+d-1}}(F_n'(\Reals^d)/\Sigma_n,pt)$ lifts to two cells $e'$ and $e''$ in $\bold{C_{n+d-1}}(F_n'(\Reals^d)/A_n,pt)$. When we tensor with $\ZZ$ via $\ZZ_2$, the projections of these cells become linearly dependent, namely $(e'+e'')\otimes 1=e' \otimes 1+e'' \otimes 1=e' \otimes 1+ e' \otimes (-1)=e'\otimes (1-1)=0$ and similarly for cells $e_{n_1}'$ and $e_{n_1}''$ in $\bold{C_{n+d}}(F_n(\Reals^d)/A_n,pt)$.
To analyze the boundary map in the case of twisted coefficients we look at differential forms on labeled configuration space $\bold{C_{n+d}}(F_n'(\Reals^d),pt)\to \bold{C_{n+d-1}}(F_n'(\Reals^d),pt)$. As before the fiber of every labeled configuration under the attaching map consists of ${n \choose n_1}$ configurations, all of them coming from distinct cells. 
Two labeled configurations in the same fiber are related by an element of $\Sigma_n$. 
Let $\sigma:[n] \to[n]$ be the permutation defined by the lexicographical order of the configuration, so that  $x_{\sigma(1)}< x_{\sigma(2)}<\dots < x_{\sigma(n)}$, and denote by $dy_i$ the $1$-form corresponding to the last coordinate of the point $x_i$. Now consider the form $dl_1\wedge dl_2 \wedge dy_{\sigma(1)} \wedge dy_{\sigma(2)} \wedge \ldots \wedge dy_{\sigma(n)}$. It induces an orientation on each $(n+d)$-dimensional cell, and similarly $dl \wedge dy_{\sigma(1)} \wedge  dy_{\sigma(2)} \wedge \ldots \wedge d y_{\sigma(n)}$ induces an orientation on each $(n+d-1)$-dimensional cell at the boundary of the previous one. The orientations of the cells were defined in terms of differential forms. The sign of the attaching map depends only on the lexicographical order, it is the sign of $\sigma$ seen as a permutation. 
Since the orientations only depend on the sign of $\sigma$, when we
project to $F_n(\Reals^d)/A_n$ the orientations of the cells
$\bold{C_i}(F_n'(\Reals^d),pt)$ induce well defined orientations of
the cells $\bold{C_i}(F_n'(\Reals^d)/A_n,pt)$. To compare the sign of
the attaching map at two different fibers, consider a path between
them in the labeled configuration space. This path projects to a loop
in the unlabeled configuration space and it projects down to a path in
its double cover $F_n(\Reals^d)/A_n$. This path in $F_n(\Reals^d)/A_n$
is a loop if and only if the sign of the permutation is positive. If
it is a loop, then both fibers are attached with the same
orientation. Otherwise, the sign of the permutation is negative and it
projects down to a path with distinct endpoints in
$F_n(\Reals^d)/A_n$. In terms of a given generator $e_{n_1}'\otimes 1
\in \bold{C_{n+d-1}}(F_n(\Reals^d)/A_n,pt)\otimes \ZZ$, this
permutation acts nontrivially on the decomposition sending $e_{n_1}'$
to its ``antipodal'' cell  $\tau(e_{n_1}')$, but
$\tau(e_{n_1}')\otimes 1=e_{n_1}'\otimes -1$ and so the negative signs, one coming from contracting the differential
form and one coming from the action, cancel each other, giving the desired 
formula \eqref{eq:coeff}.

To finish the proof, we want to apply property~(P2) of the Euler
class.  Some caution has to be taken as we are dealing with twisted
coefficients and the base space is not a manifold. Here we recall the main
idea of its proof to keep track of the coefficients, see \cite{bott}
for details. Let $i$ be the zero section and $s$ a
transversal section. By definition the Thom class $[u]$ is taken with
$or(\eta^{d-1})$ coefficients.  The zero section is Poincare dual to $[u]$ with respect to the pair $(E,E_0)$ with the orientation of the total space $or(E)$. The restriction of the bundle $E \to F_n(\Reals^d)/\Sigma_n$ to the intersection $Z=i(F_n(\Reals^d)/\Sigma_n) \cap s(F_n(\Reals^d)/\Sigma_n)$ is isomorphic to the normal bundle $\nu$ of the submanifold $s^{-1}(Z)=e$ inside $F_n(\Reals^d)/\Sigma_n$ (here transversality is used). We will denote a tubular neighborhood of this bundle inside $F_n(\Reals^d)/\Sigma_n$ by $t(\nu)$. We identify $F_n(\Reals^d)/\Sigma_n$ with $i(F_n(\Reals^d)/\Sigma_n)$ and $Z$ with $e$.  The Thom class of the bundle $E \to F_n(\Reals^d)/\Sigma_n$ restricts to the Thom class $E|_Z \to Z$, which in turn is mapped to $$[Z] \in H_{dim(Z)}(D(E|_Z),S(E|_Z); or(\nu)\otimes or(E|_Z))= H_{dim(Z)}(D(E|_Z),S(E|_Z); or(E))$$ under the Poincare duality with respect to the pair $(D(E|_Z),S(E|_Z))$, where $D$ and $S$ denote disc and sphere bundles respectively. Thinking of this pair included in the manifold $F_n(\Reals^d)/\Sigma_n$ via the vector bundle isomorphism, excision gives $$H_{dim(Z)}(D(E_Z),S(E_Z); or(E))=H_{dim(Z)}(F_n(\Reals^d)/\Sigma_n, F_n(\Reals^d)/\Sigma_n-t(\nu);or(E)).$$ On the other hand, by definition, the Euler class is the restriction of the Thom class $\chi:=i^*([u])$; we can factor this restriction through the Thom class of $\nu$. Naturality of the cap product implies that the inclusion of the cycle  $$[e]\in H_{n+d-1}(Z; or(Z)) \to H_{n+d-1}(F_n'(\Reals^d)/\Sigma,pt; or(E))$$ is the image of the Euler class
\begin{align*}
\chi(\eta^{d-1})& {} \in H^{(n-1)(d-1)}(F_n(\Reals^d)/\Sigma_n;or(E) \otimes or(F_n'(\Reals^d)/\Sigma_n))\\
& =H^{(n-1)(d-1)}(F_n(\Reals^d)/\Sigma_n;or(\eta^{d-1}))
\end{align*}
under the Lefschetz-Poincare morphism: $[a] \to [a]\cap [(F_n'(\Reals^d)/\Sigma,pt]$. We have seen that the relative
cycle $[e]$ is nontrivial in $H_{n+d-1}(F_n'(\Reals^d)/\Sigma,pt;
or(E))$ whenever $n$ is a prime power and $or(E)=\widehat{Z}$, so we
just need to check the latter. Now, $$or(E)=or(\eta^{d-1}) \otimes
or(F_n(\Reals^d)/\Sigma_n).$$ We can assume that $n$ is odd as we
already dealt with $n=2^k$. If $d$ is odd, then $or(\eta^{d-1})=\ZZ$
and $or(F_n(\Reals^d)/\Sigma_n))=\widehat{\ZZ}$. If $d$ is even, then
$or(\eta^{d-1})=\widehat{\ZZ}$ and
$or(F_n(\Reals^d)/\Sigma_n)=\ZZ$. In both cases $or(E)=\widehat{\ZZ}$.
\end{proof}

\begin{proof}[Proof of theorem~\ref{extra}]
The map 
\[
  r_{K,(\frac{1}{n},\ldots, \frac{1}{n})}:F_n(\Reals^d) \to \Reals^{n-1}
\]
given by theorem~\ref{AHA} is continuous by lemma~\ref{lemma2}. The map 
\[
  C \colon F_n(\Reals^d) \times \Reals^{n-1} \to (\mathcal{K}^d)^n
\]
taking a point in configuration space and a set of
radii to its power diagram, is continuous by lemma~\ref{lemma1}. The
functional $G$ is continuous by assumption. We can conclude that the
composition map
\[
  F_n(\Reals^d)\xrightarrow{Id \times r}
  F_n(\Reals^d) \times \Reals^{n-1} \xrightarrow{C} (\mathcal{K}^d)^n
  \xrightarrow{G\times G \times \dots \times G} \Reals^{n(d-1)}
\]
is continuous. By the choice $q=(\frac{1}{n},\ldots \frac{1}{n})$ it is
$\Sigma_n$-equivariant. By theorem~\ref{top} and the discussion surrounding it, this map has to intersect the $(d-1)$-dimensional diagonal.
\end{proof}

\subsection*{Remarks and questions}
Optimal transport makes sense in any measure metric space and it has been instrumental in proving inequalities in Riemmanian and Convex Geometry. Our proof extends to other measure metric manifolds $M$, where the geometry of the cells might not be convex anymore, but they retain some desirable properties related to the transport problem.  
Recall the Waist inequality of \cite{gro},
\begin{theorem} For any continuous map $f\colon \Sphere^d \to \Reals^k$, there exists $z \in \Reals^k$ such that,  $$vol(f^{-1}(z)+t) \geq vol(S^{d-k}+t),$$ for all $t$, where $\Sphere^{d-k}$ is an equatorial sphere inside $\Sphere^d$.
\end{theorem}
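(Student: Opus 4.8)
The plan is to derive the waist inequality from the convex-equipartition technology of this paper, following Gromov \cite{gro} in the streamlined form of Memarian \cite{mem} and Karasev \cite{k09,k10}; assume $d>k$ (the case $d=k$ can be treated separately). First I would reduce to a generic smooth $f$: approximate $f$ uniformly by smooth maps $f_j\to f$, produce for each $j$ a value $z_j$ with the desired bound on $\vol(f_j^{-1}(z_j)+t)$ for every $t$, and pass to the limit. Compactness of $\Sphere^d$ gives a subsequence with $z_j\to z^*$; by continuity of $f$ any accumulation point of points lying in the $f_j^{-1}(z_j)$ lies in $f^{-1}(z^*)$, so a Kuratowski upper limit of the fibers sits inside $f^{-1}(z^*)$, and lower semicontinuity of $A\mapsto\vol(A+t)$ under this mode of convergence transfers the bound, for all $t$ simultaneously after a diagonal argument over a countable dense set of $t$ and monotonicity in $t$. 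The same limiting device also lets me later replace the finite partitions below by arbitrarily fine ones.

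Next, fix an integer $\ell$ and a continuous centermap $c\colon\mathcal{K}(\Sphere^d)\to\Sphere^d$, say the barycenter pushed radially to the sphere. Applying the spherical equipartition theorem quoted above --- equivalently Theorem~\ref{extra} with the normalized Haar measure and the $k$ continuous functionals $G_i(A):=f_i(c(A))$, which is legitimate since $k<d$ --- partitions $\Sphere^d$ into $N=2^\ell$ geodesically convex cells $P_1,\dots,P_N$ of measure $1/N$ with $f(c(P_1))=\dots=f(c(P_N))=z$ for a single $z$. Then $c(P_i)\in f^{-1}(z)\cap P_i$, so the fiber $Y:=f^{-1}(z)$ is a \emph{transversal} of the partition: it meets every cell. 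Letting $\ell\to\infty$ forces the cells' measures, hence --- with the mild shape control discussed below --- their diameters, to tend to zero.

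The remaining and central task is a spherical isoperimetric estimate showing that a transversal of a sufficiently fine convex equipartition of $\Sphere^d$ satisfies $\vol(Y+t)\ge\vol(\Sphere^{d-k}+t)-\varepsilon$, uniformly in $t$. The mechanism is local-to-global: each convex cell $P_i$ contributes a definite amount to $Y+t$, bounded below by combining the convexity of $P_i$ with the L\'evy--Gromov isoperimetric inequality on $\Sphere^d$ (a set of measure $\theta$ has $t$-neighborhood of measure at least that of a spherical cap of measure $\theta$). Summing the cell contributions and using concavity of the spherical isoperimetric profile, one finds that equality is approached exactly when the cells collapse onto the slabs orthogonal to an equatorial $\Sphere^{d-k}$, which produces the bound $\vol(\Sphere^{d-k}+t)$; taking $\ell\to\infty$ and invoking the limiting device of the first paragraph finishes the proof. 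The codimension $k$ enters here and only here: the $k$ functional constraints force the equipartition to refine along $k$ essentially independent directions, so $N$ grows like the $k$-th power of the refinement scale and the transversal cannot be compressed into a neighborhood of anything of dimension below $d-k$; this is Gromov's \emph{pancake} mechanism.

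The hard step is precisely this last one: one must prevent the cells of the equipartition from becoming pathologically thin in a way that would defeat the local estimate, and carry out the pancake bookkeeping so that the extremal set is an equatorial $\Sphere^{d-k}$ rather than some proper subsphere. An alternative route that bypasses the shape control is induction on $k$: peel off one coordinate of $f$, slice $\Sphere^d$ into many equal-measure pancakes bounded by level sets of that coordinate, apply the inductive hypothesis for the remaining $k-1$ coordinates on each pancake --- in the log-concave-measure-on-a-convex-body form for which the optimal-transport methods developed above are designed --- and reassemble; the base case $k=1$ is L\'evy's lemma, that the median level set of a function on $\Sphere^d$ has $t$-neighborhood of volume at least $\vol(\Sphere^{d-1}+t)$, obtained by applying spherical isoperimetry to the two half-measure superlevel sets and intersecting their $t$-neighborhoods.
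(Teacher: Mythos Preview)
The paper does not prove this theorem; it is quoted in the closing remarks as Gromov's result from \cite{gro}, accompanied only by a one-sentence list of ingredients --- a localization scheme driven by an equipartition statement close to Theorem~\ref{extra}, Brunn-type comparison inequalities, and Rohlin's theory of measurable partitions --- and no further detail. There is thus no proof in the paper to compare against, only that sketch.

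Your outline follows the same architecture as that sketch and as \cite{gro,mem}: equipartition the sphere so that every cell-center lands on a common fiber, then bound the $t$-neighborhood of that fiber from below. Two substitutions relative to the paper's sketch are worth noting. You replace Rohlin's disintegration by a direct compactness-and-reverse-Fatou limit; this can be made to work on the finite-measure sphere with your density-in-$t$ trick, but it does not deliver the limiting \emph{partition} that Gromov's argument actually uses downstream. And you replace the Brunn step by L\'evy--Gromov isoperimetry plus concavity of the profile; these are cousins, but in Gromov's proof it is a Brunn-type concavity applied along each pancake that pins the extremal set to an equatorial $\Sphere^{d-k}$. The real gap --- which you yourself flag --- is the pancake estimate: invoking Theorem~\ref{extra} (or its spherical version) as a black box gives no shape control on the cells, so the assertion that the $k$ constraints force refinement ``along $k$ essentially independent directions'' does not follow from anything proved in this paper. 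In Gromov's argument that structure is obtained by building the partition through iterated bisection rather than a single equipartition call; your alternative inductive route on $k$ is much closer to what he actually does. Neither your proposal nor the paper carries out that analytic step.
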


We remark that $t$ is not assumed to be small.

The proof of this Waist theorem relies on ideas from the celebrated \emph{localization} technique. The main new tool to perform a localization-type argument when the image is not one-dimensional is a topological result which is very similar to theorem~\ref{extra}. The second ingredient consists of comparison inequalities related to Brunn's inequality. Finally Rohlin's theory delivers the result.
Is it possible to extend the analytical content of Gromov's proof to obtain a \emph{comparison} Waist inequality for manifolds of positive curvature with the normalized Riemannian volume? The key missing ingredient is a Brunn type result regarding the cells of the optimal transport from an absolutely continuous measure to a discrete measure.

If we fix the measure, the convex set, and the continuous functionals, theorem ~\ref{extra} gives a power diagram and a discrete measure for each $n=p^k$. What can we say about the combinatorics of power diagrams for some natural continuous functionals? What can we say about measures in the limit when we let $n$ go to infinity? 

What if we drop the curvature constraint? Are there other variational problems on the space of cycles or related geometric inequalities that can be approached through variational or topological problems on configuration space?

Gromov's paper \cite{gro} contains several related questions that should be revisited with the tools and ideas of optimal transport at hand, see section 9.3.A in that paper. 

Meanwhile, the Nandakumar-Ramana Rao conjecture remains open for $6$ pieces in the plane and any other number that is not a prime power.

\subsection*{Acknowledgments}
The first author would like to thank Steven Simon and Sylvain Cappell for useful discussions. 
The second author would like to thank Daniel Klain and
Erwin Lutwak for a helpful pointer.  We are grateful to Imre
B\'ar\'any for introducing us to the NRR~conjecture, and for suggesting improvements on the
exposition. We are grateful to Roman Karasev for informing us about his result and for his friendly correspondence. We are  grateful to Ed Miller for enlightening discussions.
The first author acknowledges and thanks the support from CONACyT.
The research of the second author has been supported in part by a
grant No.~2006/194 from the U.S.-Israel Binational Science Foundation,
by NSA MSP Grant H98230-10-1-0210, and by NSF Grant CCF-08-30691.
Research partially supported by the
Centre Interfacultaire Bernoulli at EPFL, Lausanne.

\subsection{Appendix: Equipartitioning measures, second proof}
As mentioned earlier, Pablo Soberon found a clever modification of our
original argument to prove a generalization of the ham sandwich theorem. Here we give a sketch of that proof.
Choosing $n=p$ prime and restricting to a cyclic subgroup $\ZZ_p\subset \Sigma_p$, the action associated with the corresponding permutation representation is free on the sphere. The Borsuk-Ulam theorem for $\ZZ_p$ actions can derived from the cup product structure of Lens spaces or from the following theorem of Dold. See~\cite{mat} for an excellent exposition on $G$-spaces.
\begin{theorem}[Dold]\label{d}
  If $X$ is an $n$-connected space, $Y$ a space of dimension at
  most~$n$, and group $G$ acts on $X$ and acts freely on $Y$, then
  there is no $G$-equivariant map from $X$ to $Y$.
\end{theorem}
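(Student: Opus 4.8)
The plan is to assume, toward a contradiction, that a $G$-equivariant map $f\colon X\to Y$ exists and to derive a contradiction in $\ZZ_p$-cohomology, in degree $n+1$, after applying the Borel construction. First I would reduce to $G=\ZZ_p$ for a prime $p$: in our applications $G$ is already $\ZZ_p$, and for a general nontrivial finite $G$ one restricts the actions to a subgroup $\ZZ_p\subseteq G$ (which exists by Cauchy's theorem), observing that $f$ stays a $\ZZ_p$-map, $X$ stays $n$-connected, and $Y$ stays a free $\ZZ_p$-CW complex of dimension at most $n$. Throughout, $X$ and $Y$ are taken to be CW complexes, as they are in all our applications.

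Next I would pass to Borel constructions. Set $X_{hG}:=EG\times_G X$ and $Y_{hG}:=EG\times_G Y$, with the fibrations $q_X\colon X_{hG}\to BG$ and $q_Y\colon Y_{hG}\to BG$; then $f$ induces $f_{hG}\colon X_{hG}\to Y_{hG}$ with $q_Y\circ f_{hG}=q_X$. The crucial point is that because $G$ acts freely on $Y$, the projection $\pi\colon Y_{hG}\to Y/G$ is a homotopy equivalence, its fibers being copies of the contractible space $EG$; and if $g\colon Y\to EG$ is a classifying $G$-map for the covering $Y\to Y/G$, the assignment $[y]\mapsto[g(y),y]$ is a section of $\pi$ through which $q_Y$ is seen to be homotopic to $\kappa\circ\pi$, where $\kappa\colon Y/G\to BG$ is the classifying map of that covering. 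Hence $q_X\simeq\kappa\circ(\pi\circ f_{hG})$ factors, up to homotopy, through $Y/G$, which is a CW complex of dimension at most $n$.

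Finally I would extract the contradiction with coefficients in the field $\ZZ_p$. Since $G=\ZZ_p$, the group $H^{n+1}(BG;\ZZ_p)$ is nonzero; choose $0\neq\alpha$ in it. On the one hand, $q_X^*\alpha$ factors through $H^{n+1}(Y/G;\ZZ_p)=0$ for dimension reasons, so $q_X^*\alpha=0$. On the other hand, in the Serre spectral sequence of $X\to X_{hG}\xrightarrow{q_X}BG$ the local systems $\mathcal H^t(X)$ vanish for $1\le t\le n$ because $X$ is $n$-connected, so no differential can hit $E_r^{n+1,0}$ and the edge homomorphism $q_X^*\colon H^{n+1}(BG;\ZZ_p)\to H^{n+1}(X_{hG};\ZZ_p)$ is injective---contradicting $q_X^*\alpha=0$.

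I expect the main obstacle to be the middle step: making precise the identification of $Y_{hG}$ with $Y/G$ \emph{compatibly with the reference maps to $BG$} (this is exactly where freeness of the $G$-action on $Y$ is used, along with CW-niceness of $Y$), and pinning down the reduction to $\ZZ_p$. The two remaining ingredients---that $H^{n+1}(B\ZZ_p;\ZZ_p)\neq 0$ for every $n\ge 0$, and that an $n$-connected fiber forces the degree-$(n+1)$ edge map to be injective---are routine. An equivalent route that avoids spectral sequences first uses equivariant obstruction theory to build a $G$-map $Y\to E_nG$, where $E_nG$ is the join of $n+1$ copies of $G$ (which is $(n-1)$-connected of dimension $n$, so the obstructions vanish through dimension $n\ge\dim Y$), and then observes that the $n$-connected space $X$ admits no $G$-map to $E_nG$; see~\cite{mat}.
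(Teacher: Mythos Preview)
The paper does not actually prove Dold's theorem: it merely states it in the appendix and points the reader to Matou\v{s}ek's book~\cite{mat} for background on $G$-spaces, then immediately moves on to the proof of Corollary~\ref{sob}. So there is no ``paper's own proof'' to compare against.

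Your argument is correct and is one of the standard routes. The reduction to $G=\ZZ_p$ is the right first move (and is all that is needed here, since in the paper's applications $G$ is already cyclic of prime order); the identification $Y_{hG}\simeq Y/G$ over $BG$ is precisely where freeness enters; and the Serre spectral sequence step is clean because $n$-connectivity kills $E_2^{s,t}$ for $1\le t\le n$, so nothing can hit $E_r^{n+1,0}$ and the edge map $q_X^*$ is injective in degree $n+1$. The alternative you sketch at the end---build a $G$-map $Y\to E_nG$ by equivariant obstruction theory (the obstructions live above $\dim Y$) and then observe that no $G$-map $X\to E_nG$ exists because $E_nG$ is only $(n-1)$-connected---is in fact closer in spirit to the treatment in~\cite{mat}, and has the advantage of avoiding spectral sequences entirely. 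Either route is fine for the paper's purposes.

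One small caveat worth flagging: the theorem as stated in the paper says ``group $G$'' without hypotheses, but your reduction via Cauchy's theorem tacitly assumes $G$ has torsion. That is harmless here (all groups in sight are finite), but Dold's theorem as usually stated does require a nontrivial finite subgroup, and your proof uses exactly that.
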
 
\begin{proof}[Proof of corollary~\ref{sob} \textup{\cite{s}}]
Assume that all the measures are probability measures. Let $p$ be a
prime that divides $n$. This time configuration space will be
substituted by $(\Reals^d)^p\setminus \Delta_d$, where $\Delta_d$ is
the $d$-dimensional diagonal, that is, we are looking at $p$-tuples of
points $\langle x_1,x_2, \ldots x_p\rangle$ such that not all of them
coincide. For each ``configuration'' in this space and each measure
we apply theorem~\ref{AHA} and obtain a power diagram and a
corresponding transformation for each measure. We are solving $d$
transport problems. Each transformation corresponds to a set of $p-1$ real values, the radii of the power diagram associated to that transformation. This is the same map we defined before (lemma~\ref{lemma2}) except when two or more points collide. In that case, all the points that collide share a single cell and we assign to each of them the same radius (corresponding to that cell). Theorem~\ref{AHA} and the discussion surrounding it imply that there is a unique (up to scaling) set of radii, again we have continuity of the transport map with respect to the source measure (even when points collide) and the same argument as at the end of lemma~\ref{lemma2} implies that this extended map to the set of radii is continuous. Observe that this map is $\ZZ_p$-equivariant.
 We have a continuous $\ZZ_p$-equivariant map $$(\Reals^d)^p \to (\Reals^{p-1})^d,$$ and we want to show that it intersects $\Delta_{p-1}$, the $(p-1)$-dimensional diagonal corresponding to all power diagrams (transports) having the same set of radii.  
As before, assuming it does not intersect $\Delta_{p-1}$ we can deformation retract it to obtain a $\ZZ_p$-equivariant map $$\Sphere^{d(p-1)-1} \to \Sphere^{(d-1)(p-1)-1},$$ but this contradicts Borsuk-Ulam theorem for $\ZZ_p$ actions.  This means that we have a power diagram with $k$ cells, with $1<k \leq p$, such that each cell has the same fraction of each measure, that is, $$\mu_1(C_j)=\mu_2(C_j)=\ldots \mu_d(C_j)=\frac{l_j}{p},$$ for some $l_j$ with $1\leq l_j < p$. Now we subdivide each cell which has $l_j>1$ measure. Consider another prime $p'$ that divides $l_j$ and apply the same reasoning. This process can be repeated, and it stops when each subcell has measure $\frac{1}{p}$ for all measures. We explained how to subdivide into $p$ cells, for $p\mid n$. Going through the factorization of $n$, we further subdivide each cell as in Karasev's proof.
\end{proof}

\end{document}